\theoremstyle{plain}
\newtheorem{theorem}[equation]{Theorem}
\newtheorem{lemma}[equation]{Lemma}
\newtheorem{corollary}[equation]{Corollary}
\theoremstyle{definition}
\theoremstyle{remark}
\newtheorem{remark}[equation]{Remark}
\numberwithin{equation}{section}
\newcommand{\bP}{\mathbb{P}}
\newcommand{\bR}{\mathbb{R}}
\newcommand\cX{\mathcal{X}}
\newcommand\cY{\mathcal{Y}}
\providecommand{\norm}[1]{\lVert#1\rVert}
\newcommand{\A}{\mathcal{A}}
\newcommand{\G}{\mathbf{G}}
\begin{document}

 \title[Navier-Stokes equations in critical mixed-norm Lebesgue spaces]{Well-posedness for the Navier-Stokes equations in  critical mixed-norm Lebesgue spaces}

\author[T. Phan]{Tuoc Phan}
\address[T. Phan]{Department of Mathematics, University of Tennessee, 227 Ayres Hall,
1403 Circle Drive, Knoxville, TN 37996-1320 }
\email{phan@math.utk.edu}

\subjclass[2010]{35Q30, 76D05, 76D03, 76N10}
\thanks{T. Phan's research is partially supported by the Simons Foundation, grant \# 354889.}

\keywords{Local well-posedness, global well-posedness, Navier-Stokes equations, mixed-norm Lebesgue spaces}
\begin{abstract}  We study the Cauchy problem in $n$-dimensional space for the system of Navier-Stokes equations in critical mixed-norm Lebesgue spaces. Local well-posedness and global well-posedness of solutions are established in the class of critical mixed-norm Lebesgue spaces. Being in the mixed-norm Lebesgue spaces, both of the initial data and the class of solutions could be singular at certain points or decaying to zero at infinity with different rates in different spatial variable directions.  Some of these singular rates could be very strong and some of the decaying rates could be significantly slow.  Besides other interests, the results of the paper particularly show an interesting phenomena on the persistence of the anisotropic behavior of the initial data under the evolution. To achieve the goals, fundamental analysis theory such as Young's inequality, time decaying of solutions for heat equations, the boundedness of the Helmholtz-Leray projection, and   the boundedness of  the Riesz tranfroms are developed in mixed-norm Lebesgue spaces. These fundamental analysis results are independently topics of  great interests and they are potentially useful in other problems.
\end{abstract}

\maketitle

\today
\section{Introduction and main results}

This paper establishes local and global well-posedness of the Cauchy problem for Navier-Stokes equations in critical mixed norm Lebesgue spaces.  
We consider the following initial value problem for the system of Navier-Stokes equations of incompressible fluid in $n$-dimensional space
\begin{equation} \label{NS.eqn}
\left\{
\begin{array}{cccl}
u_t -\Delta u + (u \cdot \nabla) u + \nabla P & = & 0 & \quad \text{in} \quad \bR^n \times (0, T), \\
\text{div}(u) & =& 0 & \quad \text{in} \quad \bR^n  \times (0, T), \\
u_{| t =0}& =& a_0 & \quad \text{on} \quad  \bR^n,
\end{array} \right.
\end{equation}
where $u =  (u_1, u_2, \cdots, u_n): \bR^n \times (0, T) \rightarrow \bR^n$ is the unknown velocity of the considered fluid with some $T>0$ and $n\geq 2$. Moreover,  $P : \bR^n \times (0, T) \rightarrow  \bR$ is the unknown fluid pressure, and $a_0$ is a given vector field initial data function which is assumed to be divergence-free.   Global well-posedness of small solutions in critical mixed-norm Lebesgue spaces and local well-posenedness for large solutions in critical mixed-norm Lebesgue spaces are established. Being in the mixed-norm Lebesgue spaces, both of the initial data and the solutions obtained in the paper could possibly decay to zero with different rates as $|x| \rightarrow \infty$ in different directions. Similarly, they could also be singular at certain points in $\bR^n$ with different rates in different directions of the  spatial $x$-variable.  As a result, this paper demonstrates an important phenomenon on the persistence of the anisotropic properties of the initial data under the evolution of  the Navier-Stokes equations.

To explain the ideas, motivation and to put our results in perspective, let us review and discuss known results concerning the Cauchy problem for the system of the Navier-Stokes equations \eqref{NS.eqn} with possibly irregular initial data in critical spaces. In 1984,  in the well-known work \cite{Kato}, T. Kato initiated the study of \eqref{NS.eqn} with initial data belonging to the space $L_n(\bR^n)$ and he proved the global existence and uniqueness of solutions of \eqref{NS.eqn} in a subspace of $C([0,\infty), L_n(\bR^n))$  provided the norm $\norm{a_0}_{L_n(\bR^n)}$
is sufficiently small. Similarly, local existence and uniqueness of solutions were also obtained in \cite{Kato} with initial data $a_0 \in L_n(\bR^n)$.  As found in \cite{Giga-M,  Kato-1, K-Ya, Kazono-Ya, Taylor}, in  \cite{Cannone-2, Cannone-Planchon, F-Planchon} and \cite[Theorem 5.40, p. 234]{Chemin},  this kind of global and local existence and uniqueness of solutions  continues to hold with initial data in homogeneous Morrey spaces $\mathcal{M}^{q,q}(\bR^n)$ for $1\leq q \leq n$, and respectively in homogeneous Besov spaces $\dot{B}_{p_0, \infty}^{-1 +\frac{n}{p_0}}(\bR^n)$ for $n \leq p_0 < \infty$.  Here,  for $1\leq q <\infty$ and $0< \lambda \leq n$, we say that a $L_q$-locally integrable function $f: \bR^n \rightarrow \bR$ belongs to the Morrey space $\mathcal{M}^{q, \lambda}(\bR^n)$ provided that its norm
\begin{equation*} 
\norm{f}_{\mathcal{M}^{q, \lambda}(\bR^n)} = \sup_{B_\rho(x_0)\subset \bR^n} \left\{ \rho^{\lambda-n} \int_{B_\rho(x_0)} |f(x)|^q dx\right\}^{\frac{1}{q}}  <\infty,
\end{equation*}
where $B_\rho(x_0)$ denotes the ball in $\bR^n$ of radius $\rho>0$ and centered at $x_0 \in \bR^n$. Also, for $q \in [1,\infty]$ and $\alpha >0$, $\dot{B}_{q, \infty}^{-\alpha}(\bR^n)$ denotes the homogeneous Besov space consisting of distributions $f$  whose norm can be equivalently defined by 
\begin{equation} \label{Besov-norm}
\norm{f}_{\dot B^{-\alpha}_{q, \infty}(\bR^n)}\approx \sup_{t>0}  t^{\frac{\alpha}{2}} \norm{e^{\Delta t} f(\cdot)}_{L_q(\bR^n)} < \infty.
\end{equation}

The significant breakthrough is due to the work \cite{Koch-Tataru} by H. Koch and D. Tataru in 2001. In this work, the authors established the global well-posedness of the Cauchy problem \eqref{NS.eqn} for small initial data in the borderline $\text{BMO}^{-1}(\bR^n)$ space. Here, the space $\text{BMO}^{-1}(\bR^n)$ can be defined as the space of all distributional divergences of $\text{BMO}(\bR^n)$ vector fields.  
On the other hand, it should be also noted that it has been shown recently by J. Bourgain and N. Pavlovi\'{c} in \cite{Bourgain-Pavlovic} that the Cauchy problem \eqref{NS.eqn} is ill-posedness in a space even smaller than $\dot B^{-1}_{\infty, \infty}(\bR^n)$.

Now, we would like to note that all of the spaces appear in the mentioned papers are invariant with respect to the scaling 
\begin{equation} \label{scaling}
f (\cdot) \rightarrow \lambda f (\lambda \cdot), \quad  \lambda  > 0
\end{equation}
in the sense that for every $f$  in some space $E$ that we just mentioned, then
\[
\norm{f(\cdot)}_{E}  = \norm{\lambda f(\lambda \cdot)}_{E}, \quad \forall \  \lambda >0.
\]
In other words,  up to now, $\text{BMO}^{-1}(\bR^n)$ is the largest known space that is invariant under the scaling \eqref{scaling} on which the Cauchy problem for the system of the Navier-Stokes equations \eqref{NS.eqn} is globally well-posed for small initial data. Interested readers may find in \cite{Giga-1, Giga} for related results in bounded domains, and in \cite[Chapter 5]{Chemin}, \cite[Chapters 7 - 9]{L-R} and \cite[Chapter 5]{Tsai} for further results, discussion, and more related references.

Motivated by the mentioned work, this paper continues the study of the well-posedness of the Cauchy problem \eqref{NS.eqn} in critical spaces. We plan to refine and extend all the mentioned known work to a completely new and interesting direction. In this paper, we particularly focus on the Lebesgue space setting. Unlike the mentioned results, we investigate the class of initial data and solutions for the Cauchy problem \eqref{NS.eqn} that possibly decay to zero with different rates  as $|x| \rightarrow \infty$ in different directions. Some of these rates could be extremely slow. Similarly, the class of initial data and solutions investigated in this paper could also be singular at certain points in $\bR^n$ with different singularity rates in different spatial directions, and some of which could be very strong.  As the initial data and the solutions are in the same class of such functions, and besides other interests, the results of this paper particularly demonstrate the persistence of the anisotropic properties of the initial data under the evolution of the Navier-Stokes equations.  To the best of our knowledge, this phenomenon is even not known for the heat equation.  To achieve the goals, we follow the spirit of Krylov in the work \cite{Krylov} and use mixed norm Lebesgue spaces to capture the features of those kinds of functions. Several important analysis inequalities and estimates in mixed norm Lebesgue spaces will be also developed in this paper. See also \cite{Dong-Kim, Dong-Krylov, Dong-Phan, TP} for some other related work and \cite{Krylov-survey} for a survey paper on some interesting features regarding mixed norm Lebesgue spaces. 

For $p_1, p_2, \cdots, p_n \in [1, \infty)$, and for a given measurable function $f: \bR^n \rightarrow \bR$, we say that $f$ belongs to the the mixed-norm Lebesgue space $L_{p_1p_1\cdots p_n}(\bR^n)$ if its norm
\[
\begin{split}
& \norm{f}_{L_{p_1p_1\cdots p_n}(\bR^n)} \\
&  = \left( \left( \left(\cdots  \left(\int_{\bR} |f(x_1, x_2,\cdots x_n)|^{p_1} dx_1 \right)^{\frac{p_2}{p_1}} dx_2 \cdots    \right)^{\frac{p_{n-1}}{p_{n-2}}}dx_{n-1}\right )^{\frac{p_{n}}{p_{n-1}}} dx_n \right)^{\frac{1}{p_n}} <\infty.
\end{split}
\]
Similar definitions can be also formulated if some of the indices in $\{p_1, p_2, \cdots, p_n\}$ are equal to $\infty$.  Note that it follows directly from the definition that if $p=p_1 = p_2=\cdots = p_n$, then $L_{p_1p_2\cdots p_n}(\bR^n)$ is the same as the usual Lebesgue space $L_p(\bR^n)$. 

To clearly explain our ideas as well as to understand the importance of the mixed norm Lebesgue spaces, let us consider the following example about a function that is decaying to zero at different rates as $|x| \rightarrow \infty$. Similar examples can be easily produced about different rates of singularity of functions at some certain points.  We consider a bounded measurable function $f: \bR^3 \rightarrow$ satisfying
\begin{equation} \label{example}
|f(x)| \leq \frac{N}{|x_1|^{10^{-k}}|x'|^{10^{k}}}, \quad x =(x_1, x') \in \bR \times \bR^2, \quad |x| >1,
\end{equation}
with some given constant $N>0$ and $k \in \mathbb{N}$ which could be very large.  It can be seen that $f \in L_{p_1p_2p_3}(\bR^3)$ with $p_1 > 10^{k}$ and $p_2 =p_3 > \frac{2}{10^k}$. However, if we consider the usual Lebesgue space, then $f \in L_{p}(\bR^3)$ only if $p > 10^k$, which can be very large when we choose $k$ sufficiently large. In other words, the very fast decaying directions in $(x_2,x_3)$-variable of the function $f$ is completely invisible in the usual unmixed Lebesgue spaces. As a consequence, in the unmixed spaces, the class of functions $f$ as in \eqref{example} is viewed the same as the class of extremely slow decaying functions with
\begin{equation*} 
|f(x)| \leq \frac{N}{|x|^{10^{-k}}}, \quad |x| >1.
\end{equation*}



Now, it is surprisingly interesting to note that for  given numbers $p_1, p_2,\cdots, p_n \in [1, \infty]$, the mixed-norm space $L_{p_1p_2\cdots p_n}(\bR^n)$ is invariant under the scaling \eqref{scaling} if and only if
\begin{equation} \label{p-cond}
\frac{1}{p_1} + \frac{1}{p_2} + \cdots + \frac{1}{p_n} =1.
\end{equation}
The class of critical mixed-norm spaces $L_{p_1p_2\cdots p_n}(\bR^n)$ such that \eqref{p-cond} holds is  the one we will establish the well-posedness for solutions of the Navier-Stokes equations \eqref{NS.eqn} in this paper.  Note that in the special case when $p_1 = p_2=\cdots=p_n$ and \eqref{p-cond} holds, we have $p_1 = p_2=\cdots = p_n =n$. On the other hand,  we also note, as an example, that for the class of functions as in \eqref{example}, it is possible to choose $p_1>10^k$ and $p_2 = p_3 > 2$ but sufficiently close to $2$ so that the triple $(p_1, p_2, p_3)$ satisfies the condition \eqref{p-cond}.  Therefore, in some  certain sense, this paper can be considered as a natural but completely non-trivial extension of the work \cite{Kato}.

Before stating our results, let us introduce some notations used in the paper. For given $p_k \in [1, \infty)$, we write $PL_{p_1p_2\cdots p_n}(\bR^n)$ the space of all vector fields $f \in L_{p_1p_2\cdots p_n}(\bR^n)$ such that
\[
\text{div}(f) =0  \quad \text{in} \quad \bR^n \quad \text{in the sense of distributions}.
\]
Also, for  given $p = (p_1, p_2,\cdots, p_n)$ and $q = (q_1, q_2,\cdots, q_n)$ such that $p_k \in (1, \infty)$ and $q_k \in [p_k, \infty)$ for all $k =1, 2, \dots, n$. Assume that \eqref{p-cond} holds and 
\begin{equation} \label{q-cond}
\frac{1}{q_1} + \frac{1}{q_2} + \cdots + \frac{1}{q_n} = \delta \in (0, 1).
\end{equation}
Then, with given $T \in (0, \infty]$, we denote $\mathcal{X}_{p, q, T}$ the space consisting of all measurable vector field functions $f: \bR^n \times [0, T) \rightarrow \bR^n$ such that 
for
\[
g(x,t) := t^{(1-\delta)/2} f(x,t), \quad \tilde{g}(x,t) := t^{\frac{1}{2}}D_x f (x,t), \quad \text{for} \quad (x,t) \in \bR^n \times (0, T)
\]
then
\[
g  \in \text{C}([0, T), PL_{q_1q_2\cdots q_n}(\bR^n)), \quad \tilde{g} \in  \text{C}([0, T), PL_{p_1p_2\cdots p_n}(\bR^n)) 
\]
and moreover $g(x,0) = 0, \tilde{g}(x,0) =0$ and the norm
\begin{equation} \label{X-spc.def}
\norm{f}_{{\cX}_{p,q, T}} = \sup_{t \in (0, T)} \Big[ \norm{g(\cdot, t)}_{L_{q_1q_2\cdots q_n}(\bR^n)}  +  \norm{\tilde{g}}_{L_{p_1p_2\cdots p_n}(\bR^n)} \Big]<\infty. 
\end{equation}
We also denote $\cY_{p, T}$ the space consisting of all vector field functions $f \in C([0, T), PL_{p_1p_2\cdots p_n}(\bR^n))$ such that $t^{1/2} D_x f \in C([0, T), PL_{p_1p_2\cdots p_n}(\bR^n))$ and
\begin{equation} \label{Y-spc.def}
\norm{f}_{\mathcal{Y}_{p, T}} = \sup_{t \in (0, T)} \Big[ \norm{f(t)}_{L_{p_1p_2\cdots p_n}(\bR^n)} + t^{1/2} \norm{D_xf(t)}_{L_{p_1p_2\cdots p_n}(\bR^n)} \Big] <\infty.
\end{equation}


%

The following theorem on local and global well-posedness of the Cauchy problem \eqref{NS.eqn} in the critical mixed-norm Lebesgue spaces  $L_{p_1p_2\cdots p_n}(\bR^n)$ is the main result of the paper.
\begin{theorem} \label{main-thrm}  Let $p= (p_1, p_2,\cdots, p_n)$ and $q = (q_1, q_2, \cdots, q_n)$. Assume that $p_k \in (2, \infty)$ and $q_k \in [p_k, \infty)$ for all $k = 1, 2,\cdots, n$. Assume also that  \eqref{p-cond} and \eqref{q-cond} hold. Then, there exist a sufficiently small constant $\lambda_0>0$ and a large number $N_0>0$ depending only on $n$ and $p, q$  such that the following assertions hold.
\begin{itemize}
\item[\textup(i)] For every $a_0 \in L_{p_1p_2\cdots p_n}(\bR^n)^n$ with $\nabla \cdot a_0 =0$,  if $\norm{a_0}_{L_{p_1p_2\cdots p_n}(\bR^n)} \leq \lambda_0$, then the Cauchy problem \eqref{NS.eqn} has unique global time solution $u \in \cX_{p,q,\infty} \cap \cY_{p, q, \infty}$ with
\[
\begin{split}
& \norm{u}_{\cX_{p,q,\infty}} \leq N_0 \norm{a_0}_{ L_{p_1p_2\cdots p_n}(\bR^n)} \quad \text{and} 
\\
&\norm{u}_{\cY_{p, q, \infty}} \leq N_0\norm{a_0}_{ L_{p_1p_2\cdots p_n}(\bR^n)}.
\end{split}
\]
\item[\textup{(ii)}] For every $a_0 \in L_{p_1p_2\cdots p_n}(\bR^n)^n$ with $\nabla \cdot a_0 =0$, there exists $T_0>0$ sufficiently small depending on $n, p, q$ and $a_0$ such that the Cauchy problem \eqref{NS.eqn} has unique local time solution $u \in \cX_{p,q, T_0} \cap \cY_{p, q, T_0}$ with 
\[
\begin{split}
& \norm{u}_{\cX_{p,q, T_0}} \leq N_0 \norm{a_0}_{ L_{p_1p_2\cdots p_n}(\bR^n)} \quad \text{and} \\
& \norm{u}_{\cY_{p, q, T_0}} \leq N_0\Big[ \norm{a_0}_{ L_{p_1p_2\cdots p_n}(\bR^n)}+ \norm{a_0}^2_{ L_{p_1p_2\cdots p_n}(\bR^n)} \Big].
\end{split}
\]
\end{itemize}
\end{theorem}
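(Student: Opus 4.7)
The plan is to recast \eqref{NS.eqn} in its mild (Duhamel) formulation
\[
u(t) = e^{t\Delta}a_0 + B(u,u)(t), \qquad B(u,v)(t) := -\int_0^t e^{(t-s)\Delta}\bP\, \nabla\cdot(u\otimes v)(s)\,ds,
\]
where $\bP$ denotes the Helmholtz-Leray projection, and to solve the resulting fixed-point equation by a Banach contraction argument in the weighted anisotropic space $\cX_{p,q,T}\cap \cY_{p,T}$.  The exponents $(1-\delta)/2$ and $1/2$ appearing in \eqref{X-spc.def} and \eqref{Y-spc.def} are precisely the parabolic scaling weights forced by \eqref{p-cond} and \eqref{q-cond}, and the four ingredients the paper advertises as being developed — Young's inequality, heat semigroup decay, and the boundedness of $\bP$ and of the Riesz transforms in mixed-norm Lebesgue spaces — form exactly the toolbox required.

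The first step is the linear estimate $\|e^{\cdot\Delta}a_0\|_{\cX_{p,q,T}}+\|e^{\cdot\Delta}a_0\|_{\cY_{p,T}} \leq N\|a_0\|_{L_{p_1\cdots p_n}(\bR^n)}$, which reduces to the smoothing bounds $\|e^{t\Delta}f\|_{L_{q_1\cdots q_n}}\lesssim t^{-(1-\delta)/2}\|f\|_{L_{p_1\cdots p_n}}$ and $\|\nabla e^{t\Delta}f\|_{L_{p_1\cdots p_n}}\lesssim t^{-1/2}\|f\|_{L_{p_1\cdots p_n}}$ plus continuity at $t=0$.  Both smoothing bounds follow from Young's convolution inequality in mixed-norm spaces, applied to the Gaussian kernel which factorises across coordinate directions; the decay exponents are forced by \eqref{p-cond} and \eqref{q-cond}.

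The central step is the bilinear estimate $\|B(u,v)\|_{\cX_{p,q,T}\cap \cY_{p,T}} \leq N\|u\|_{\cX_{p,q,T}}\|v\|_{\cX_{p,q,T}}$.  Here I would use mixed-norm H\"older to bound $\|(u\otimes v)(s)\|_{L_{q_1/2,\ldots,q_n/2}} \leq \|u(s)\|_{L_q}\|v(s)\|_{L_q} \lesssim s^{-(1-\delta)}\|u\|_{\cX}\|v\|_{\cX}$; dispose of $\bP$ and the Riesz-type operator $\nabla(-\Delta)^{-1/2}$ via their boundedness on $L_{p_1\cdots p_n}$ and $L_{q_1\cdots q_n}$; let the remaining gradient fall on $e^{(t-s)\Delta}$ (loss $(t-s)^{-1/2}$); and apply the mixed-norm heat smoothing from $L_{q/2}$ to $L_q$ (loss $(t-s)^{-\delta/2}$, again by \eqref{q-cond}).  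The Beta-function identity
\[
\int_0^t (t-s)^{-(1+\delta)/2}\,s^{-(1-\delta)}\,ds = N\, t^{-(1-\delta)/2}
\]
converges precisely because $0<\delta<1$ and reproduces the correct weight on the $L_q$ component of the $\cX$-norm.  A parallel calculation, splitting the integral at $s=t/2$ so that on $(0,t/2)$ the gradient falls on the heat kernel and on $(t/2,t)$ it falls on $u$ or $v$ (using the $\tilde g$ component of $\cX$), closes the $t^{1/2}D_x$ part of the $\cX$-norm and the $\cY$-norm.

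With these two estimates in hand, the conclusion is a standard Kato-type fixed-point argument applied to $\Phi(u):= e^{\cdot\Delta}a_0 + B(u,u)$.  For (i), taking $\lambda_0$ small relative to the linear and bilinear constants makes $\Phi$ a contraction on a small ball of $\cX_{p,q,\infty}\cap \cY_{p,\infty}$.  For (ii) the smallness is instead produced by shrinking $T_0$: one shows $\|e^{\cdot\Delta}a_0\|_{\cX_{p,q,T_0}}\to 0$ as $T_0\to 0$ by a density argument, splitting $a_0$ into a regular part in $L_{p_1\cdots p_n}\cap L_{q_1\cdots q_n}$ (annihilated as $t\to 0$ by the weight $t^{(1-\delta)/2}$) and a small $L_{p_1\cdots p_n}$-remainder (controlled by the linear estimate); the extra $\|a_0\|_{L_p}^2$ in the $\cY_{p,T_0}$ bound arises because the $\cY$-norm of the linear part is \emph{not} small, and is closed off through the bilinear term.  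The main obstacle I anticipate is the bilinear estimate: one must simultaneously deploy mixed-norm H\"older, mixed-norm heat smoothing $L_{q/2}\to L_q$, and mixed-norm boundedness of $\bP$ and the Riesz transforms, with delicate bookkeeping of the anisotropic exponents $p_k,q_k$ and the time weights.  A secondary technical point is the density step for (ii), where the approximation of $a_0$ must be chosen compatibly with the scaling weight $t^{(1-\delta)/2}$.
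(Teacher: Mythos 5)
Your proposal is correct and follows essentially the same route as the paper: the Duhamel reformulation $u=u_0+\G(u,u)$, the linear and bilinear estimates built from mixed-norm Young/heat smoothing (Theorem \ref{heat-decay}) together with the mixed-norm boundedness of $\bP$ (Corollary \ref{H-L-projection}), the standard bilinear fixed-point lemma run in $\cX_{p,q,T}$ alone with the $\cY$-bound derived a posteriori (which, as you correctly note, is what produces the $\norm{a_0}^2$ term in (ii)), and the density argument giving $\norm{u_0}_{\cX_{p,q,T_0}}\to 0$ as $T_0\to 0$. The only cosmetic difference is that you write the nonlinearity in divergence form $\nabla\cdot(u\otimes v)$ and pair $\norm{u}_{L_q}\norm{v}_{L_q}$ with a gradient loss on the kernel, whereas the paper keeps $(u\cdot\nabla)v$ and pairs $\norm{u}_{L_q}$ with $\norm{D_xv}_{L_p}$ via its Lemma \ref{nonlinear-est}; for divergence-free fields these are interchangeable and yield the same Beta-function exponents.
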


To the best of our knowledge, this is the first time that the kinds of solutions of Navier-Stokes equations in critical mixed-norm Lebesgue spaces are discovered.  As demonstrated in the example in \eqref{example} and the discussion after \eqref{p-cond}, it is possible to choose some of $p_1, p_2,\cdots, p_n$ to be very large numbers so that the given numbers $(p_1, p_2, \cdots, p_n)$ still satisfy the condition \eqref{p-cond}. Due to this reason, in some directions, the class of the initial data and the solutions in Theorem \ref{main-thrm} could decay significantly slow. Similarly, some of the singularity rates in some spatial directions could be very strong. Hence,  our class of solutions  may not belong to $L_n(\bR^n)$ nor $L_2(\bR^n)$, and the solutions obtained in Theorem \ref{main-thrm} may not belong to the classes of solutions found in the papers \cite{Kato, Leray, Leray-1}. Observe also that if $p_1 = p_2=\cdots=p_n$ and \eqref{p-cond} holds, then $p_1 = p_2=\cdots = p_n =n$. In this sense, this paper can be considered as a natural, but completely non-trivial extension of the work \cite{Kato}.

Now, we summarize the above discussion with the following remarks regarding  Theorem \ref{main-thrm}.
\begin{remark} \label{main-remark} The following interesting points are worth highlighting.
\begin{itemize}
\item[\textup{(i)}] Under the condition \eqref{p-cond}, the initial data and the solutions obtained in Theorem \ref{main-thrm} may decay to zero very slow as $|x| \rightarrow \infty$. Similarly, they could also be strongly singular in some spatial directions. Therefore, the solutions obtained in Theorem \ref{main-thrm} may not be in $L_{n}(\bR^n)$ nor $L_2(\bR^n)$.  Consequently, these solutions may not be the same as the ones obtained  in \cite{Kato, Leray, Leray-1}.
\item[\textup{(ii)}]   Let $p_0 \in ( \max\{p_1, p_2, \cdots, p_n, n\}, \infty)$, where $p_1, p_2, \cdots, p_n$ are as in Theorem \ref{main-thrm}. Then, if $a_0 \in L_{p_1p_2\cdots p_n}(\bR^n)$, it follows from the characterization of Besov spaces with negative regularity (see Remark \ref{Besov-space} below) that $a_0 \in \dot B^{-1+\frac{n}{p_0}}_{p_0,\infty}(\bR^n) \subset \textup{BMO}^{-1}(\bR^n)$. In view of this and the results obtained  in \cite{Cannone-2, Cannone-Planchon, Koch-Tataru, F-Planchon} and \cite[Theorem 5.40, p. 234]{Chemin}, Theorem \ref{main-thrm} can be seen as a refinement of these results regarding the persistence of the anisotropic properties of the initial data under the evolution of the Navier-Stokes equations. See also \cite[Section 3.3]{Brandolese} for some different but related results.
\end{itemize}
\end{remark}

To prove Theorem \ref{main-thrm}, we follow the approach developed in \cite{Fu-Ka-1, Fu-Ka, Kato} and in \cite{Giga-1, Giga,  Koch-Tataru, Meyer}. To implement the method, several important and fundamental analysis estimates in mixed norm Lebesgue spaces are developed.  In Section \ref{pre-sec}, we develop and prove a version of Young's inequality in mixed norm Lebesgue spaces. We then use Young's inequality in  mixed norm Lebesgue spaces to establish the time decaying estimates for solutions of heat equations in  mixed norm Lebesgue spaces. The boundedness of the Riesz transform and the boundedness of the Helmholtz-Leray projection in mixed-norm Lebesgue spaces are also established and proved in Section \ref{pre-sec}.  Clearly, these analysis inequalities and estimates are independently topics of great interests and they can be useful in many other problems. To the best of our knowledge,  this paper is the first time that those mixed-norm analysis estimates are developed.  Therefore, besides the great interest  and contribution of our study in the Navier-Stokes equations, the contribution in real and harmonic analysis theory of this paper is also very significant.  The paper concludes with Section \ref{proof-main-result} which provides the proof of Theorem \ref{main-thrm}.

\section{Preliminaries on Analysis inequalities in mixed-norm Lebesgue spaces} \label{pre-sec}

This section gives some main ingredients for the proof of the main theorems in the paper. In particular, we develop Young's inequality in mixed-norm Lebesgue spaces, time decaying rate estimates for solutions of the Cauchy problem for the heat equation in mixed-norm Lebesgue spaces, and Helmholtz-Leray projection in mixed-norm Lebesgue spaces. These results are not only new, fundamental, but they are topics of independent interests and could be useful for many other purposes. For your convenience, we recall that for $p_1, p_2, \cdots p_n \in [1, \infty)$, and for a measurable function $f: \bR^n \rightarrow \bR$, we say that $f$ is in the mixed-norm Lebesgue space $L_{p_1p_1\cdots p_n}(\bR^n)$ if  its norm
\[
\begin{split}
& \norm{f}_{L_{p_1p_1\cdots p_n}(\bR^n)} \\
& = \left( \left( \left(\cdots  \left(\int_{\bR} |f(x_1, x_2,\cdots x_n)|^{p_1} dx_1 \right)^{\frac{p_2}{p_1}} dx_2 \cdots    \right)^{\frac{p_{n-1}}{p_{n-2}}}dx_{n-1}\right )^{\frac{p_{n}}{p_{n-1}}} dx_n \right)^{\frac{1}{p_n}} < \infty.
\end{split}
\]
Similar definitions can be also formulated if some of the indices $\{p_1, p_2, \cdots, p_n\}$ are equal to $\infty$. As we already discussed, the significant role of the the mixed-norm Lebesgue space $L_{p_1p_1\cdots p_n}(\bR^n)$ is that it captures very well the functions that are singular at certain points or decaying to zero as $|x| \rightarrow \infty$ with different rates in different $x$-directions.
\subsection{Young's inequality in mixed norm Lebesgue spaces} This subsection establishes the following new result on Young's inequality in mixed norm Lebesgue spaces. The result will be useful in the study of heat equations in mixed norm Lebesgue spaces. Our theorem can be stated as in the following.
\begin{theorem}[Young's inequality in mixed norm] \label{Young} Let $p_k, r_k$ and $q_k$ be given numbers in $[1,\infty]$ that satisfy
\[
\frac{1}{p_k} + 1 = \frac{1}{q_k} + \frac{1}{r_k}, \quad k = 1, 2,\cdots, n.
\]
Then
\begin{equation} \label{young-inq}
\norm{f*g}_{L_{p_1p_2\cdots p_n}(\bR^n)} \leq \norm{f}_{L_{q_1q_2\cdots q_n}(\bR^n)} \norm{g}_{L_{r_1r_2\cdots r_n}(\bR^n)}
\end{equation}
for every $f \in L_{q_1q_2\cdots q_n}(\bR^n)$ and $g \in L_{r_1r_2\cdots r_n}(\bR^n)$.
\end{theorem}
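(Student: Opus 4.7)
The plan is to prove the inequality by induction on the dimension $n$. The base case $n=1$ is the classical one-dimensional Young convolution inequality on $\bR$. For the inductive step, I would exploit the iterated structure of the mixed norm by peeling off the first variable, applying the inductive hypothesis in the remaining $n-1$ variables, and then applying the one-dimensional Young inequality in the first variable.

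In detail, assume the statement holds in dimension $n-1$. Writing $x = (x_1, x')$ with $x' \in \bR^{n-1}$ and denoting by $*'$ the convolution on $\bR^{n-1}$, decompose
\[
(f*g)(x_1, x') = \int_{\bR} (F_{y_1} *' G_{x_1 - y_1})(x') \, dy_1,
\]
where $F_{y_1}(z') := f(y_1, z')$ and $G_s(z') := g(s, z')$. The indices $(p_2,\dots,p_n)$, $(q_2,\dots,q_n)$, $(r_2,\dots,r_n)$ satisfy the analogous Young relation, so the inductive hypothesis gives, for a.e.\ $(x_1, y_1)$,
\[
\|F_{y_1} *' G_{x_1-y_1}\|_{L_{p_2\cdots p_n}(\bR^{n-1})} \leq \|F_{y_1}\|_{L_{q_2\cdots q_n}(\bR^{n-1})} \, \|G_{x_1-y_1}\|_{L_{r_2\cdots r_n}(\bR^{n-1})}.
\]

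Next, I would apply Minkowski's integral inequality successively in each of the variables $x_2, \dots, x_n$ to move the mixed $L_{p_2\cdots p_n}$-norm past the integral over $y_1$, obtaining
\[
\|(f*g)(x_1, \cdot)\|_{L_{p_2\cdots p_n}(\bR^{n-1})} \leq (a * b)(x_1),
\]
where $a(y_1) := \|F_{y_1}\|_{L_{q_2\cdots q_n}(\bR^{n-1})}$ and $b(s) := \|G_s\|_{L_{r_2\cdots r_n}(\bR^{n-1})}$. Finally, take the $L^{p_1}$-norm in $x_1$ and invoke the classical one-dimensional Young inequality with $\tfrac{1}{p_1} + 1 = \tfrac{1}{q_1} + \tfrac{1}{r_1}$ to conclude
\[
\|f*g\|_{L_{p_1\cdots p_n}(\bR^n)} \leq \|a*b\|_{L^{p_1}(\bR)} \leq \|a\|_{L^{q_1}(\bR)} \|b\|_{L^{r_1}(\bR)} = \|f\|_{L_{q_1\cdots q_n}(\bR^n)}\,\|g\|_{L_{r_1\cdots r_n}(\bR^n)},
\]
which closes the induction.

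The main point of care is the iterated Minkowski step: one must justify pulling an integral through a nested $L_{p_2}\!\cdots L_{p_n}$ norm when each $p_k \in [1,\infty]$. I would handle this as a short auxiliary lemma, itself proved by induction on the number of variables, with the endpoint cases $p_k=1$ or $p_k=\infty$ absorbed respectively by Fubini's theorem or the essential supremum; measurability of the partial norms as functions of the remaining variables is standard once $f$ and $g$ are measurable. Apart from this bookkeeping, the argument is entirely driven by the two classical ingredients (1D Young and Minkowski), so I do not anticipate any essential obstruction.
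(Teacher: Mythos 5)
Your strategy is sound in outline, but as written you peel the wrong end of the iterated norm, and the concluding step is false in general. In the paper's definition of $\norm{\cdot}_{L_{p_1p_2\cdots p_n}(\bR^n)}$ the variable $x_1$ is integrated \emph{innermost}. The quantity you actually control, namely
$\bignorm{\norm{(f*g)(x_1,\cdot)}_{L_{p_2\cdots p_n}(\bR^{n-1})}}_{L_{p_1}(\bR)} \leq \norm{a*b}_{L_{p_1}(\bR)}$,
is the iterated norm with $x_1$ \emph{outermost}. Your final inequality $\norm{f*g}_{L_{p_1\cdots p_n}(\bR^n)} \leq \norm{a*b}_{L_{p_1}(\bR)}$ therefore tacitly interchanges the $x_1$-norm with all the outer norms. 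By iterated Minkowski such an interchange is guaranteed only when $p_1 \leq p_k$ for every $k \geq 2$; otherwise the comparison goes the wrong way. Concretely, take $n=2$, $p_1=\infty$, $p_2=1$, and $h(x_1,x_2)=\mathbf{1}_{[0,1]}(x_1-x_2)\,\mathbf{1}_{[0,10]}(x_2)$: then $\norm{h}_{L_{\infty 1}(\bR^2)} = \int_{\bR}\sup_{x_1}|h|\,dx_2 = 10$, while $\sup_{x_1}\int_{\bR}|h|\,dx_2 = 1$, so a bound on the $x_1$-outermost quantity does not bound the mixed norm. Since the whole point of the theorem is to allow wildly different exponents (in the paper $p_1$ may be huge while the others are small), this is a substantive gap, not bookkeeping. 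Your worry about the Minkowski step, by contrast, concerns the harmless part: pulling the $y_1$-integral out of the inner mixed norm is just the integral triangle inequality and is valid for all exponents in $[1,\infty]$.

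The repair is small: peel off the \emph{last} variable instead. Writing $x=(x'',x_n)$, the definition gives directly $\norm{f*g}_{L_{p_1\cdots p_n}(\bR^n)} = \bignorm{\norm{(f*g)(\cdot,x_n)}_{L_{p_1\cdots p_{n-1}}(\bR^{n-1})}}_{L_{p_n}(\bR)}$ with no reordering needed; apply the inductive hypothesis to the inner $(n-1)$-dimensional convolutions $f(\cdot,y_n)*'' g(\cdot,x_n-y_n)$, pull the $y_n$-integral out by integral Minkowski, and finish with one-dimensional Young in $x_n$, noting that $\norm{\,\norm{f(\cdot,y_n)}_{L_{q_1\cdots q_{n-1}}}\,}_{L_{q_n}(\bR,dy_n)} = \norm{f}_{L_{q_1\cdots q_n}(\bR^n)}$ because $y_n$ is outermost. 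With that single change your proof is correct, and it is genuinely different from the paper's: the paper peels the \emph{innermost} variable $x_1$, carrying out the classical three-exponent H\"older splitting of $|f|\,|g|$ in $y_1$ together with Minkowski and Fubini to obtain the pointwise bound $G(x') \leq (f_1*g_1)(x')$, and only then invokes induction in the remaining $n-1$ variables. Your corrected route runs induction on the inner block and 1D Young on the outer variable, avoiding the H\"older splitting entirely; but the choice of which variable to peel is forced by the order of iteration in the mixed norm, and that is exactly what your draft gets wrong.
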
 
\begin{proof}  We use induction on $n$. Observe that when $n =1$, the inequality \eqref{young-inq} is the classical Young's inequality. We now assume that the inequality holds true in $(n-1)$-dimension and prove it for $n$-dimension with $n \geq 2$.  Let us denote $p_k', q_k', r_k'$ the H\"{o}lder's conjugates of $p_k, q_k, r_k$ respectively. By the assumption, we see that
\begin{equation} \label{prq-k}
\frac{1}{r_k'} + \frac{1}{p_k} + \frac{1}{q_k'} =1, \quad \frac{r_k}{p_k} + \frac{r_k}{q_k'} =1, \quad \text{and} \quad \frac{q_k}{r_k'} + \frac{q_k}{p_k} = 1, \quad k =1, 2,\cdots, n.
\end{equation}
We split the proof into three different cases.\\ \ \\
{\bf Case I}. We assume that $p_1 <\infty$ and $q_1 < \infty$. In this case, we also see that $r_1 < \infty$. For $x, y \in \bR^n$, we write $x = (x_1, x') \in \bR \times \bR^{n-1}$ and $y = (y_1, y') \in \bR \times \bR^{n-1}$.  As $q_1 <\infty$, by using the last two identities in \eqref{prq-k}, we have
\[
\begin{split}
& |(f*g)(x)|  \leq \int_{\bR^n} |f(y)||g(x-y)| dy  \\
& = \int_{\bR^{n-1}} \left[ \int_{\bR} |f(y_1, y')| |g(x_1 - y_1, x' - y')| dy_1\right] dy' \\
& =\int_{\bR^{n-1}} \left[ \int_{\bR} |f(y_1, y')|^{\frac{q_1}{r_1'}}\Big ( |f(y_1, y')|^{\frac{q_1}{p_1}} |g(x_1 - y_1, x' - y')|^{\frac{r_1}{p_1}} \Big)  |g(x_1-y_1, x' - y')|^{\frac{r_1}{q_1'}} dy_1\right] dy',
\end{split}
\]
Note that in the above inequality also holds when $r_1'= \infty$ with $\frac{1}{r_1'} =0$.  Now, by using the first identity in \eqref{prq-k} and the H\"{o}lder's inequality with respect to the integration in $y_1$-variable, we obtain
\begin{equation} \label{1-step-Y.eqn}
\begin{split}
& |(f* g)(x)| \\
& \leq \int_{\bR^{n-1}} \left[ |f_1(y')|^{\frac{q_1}{r_1'}} |g_1(x'-y')|^{\frac{r_1}{q_1'}} \left(\int_{\bR}|f(y_1,y')|^{q_1} |g(x_1-y_1, x'-y')|^{r_1} dy_1 \right)^{\frac{1}{p_1}}\right] dy',
\end{split}
\end{equation}
where we denote
\begin{equation} \label{f-g-1}
\begin{split}
& f_1(y') = \left(\int_{\bR}|f(y_1, y')|^{q_1} dy_1 \right)^{\frac{1}{q_1}}\quad \text{and} \\
& g_1(y') = \left(\int_{\bR} |g(y_1, y')|^{r_1} dy_1 \right)^{\frac{1}{r_1}} , \quad \text{for a.e.} \quad y' \in \bR^{n-1}.
\end{split}
\end{equation}
As $p_1 <\infty$, it follows from \eqref{1-step-Y.eqn} that
\[
\begin{split}
& G(x') := \left( \int_{\bR} |(f*g)(x_1, x')|^{p_1} dx_1 \right)^{\frac{1}{p_1}}\\
& \leq \left\{\left(\int_{\bR^{n-1}} \left[ |f_1(y')|^{\frac{q_1}{r_1'}} |g_1(x'-y')|^{\frac{r_1}{q_1'}} \left(\int_{\bR}|f(y_1,y')|^{q_1} |g(x_1-y_1, x'-y')|^{r_1} dy_1 \right)^{\frac{1}{p_1}}\right] dy' \right)^{p_1} dx_1 \right\}^{\frac{1}{p_1}}.
\end{split}
\]
From this, and by using the Minskowski's inequality, we see that
\[
\begin{split}
& \left( \int_{\bR} |(f*g)(x_1, x')|^{p_1} dx_1 \right)^{\frac{1}{p_1}}\\
& \leq \int_{\bR^{n-1}} \left ( |f_1(y')|^{\frac{q_1}{r_1'}} |g_1(x'-y')|^{\frac{r_1}{q_1'}} I(x', y') \right) dy' ,
\end{split} 
\]
where
\[
I (x', y')= \left( \int_{\bR}\left(\int_{\bR}|f(y_1,y')|^{q_1} |g(x_1-y_1, x'-y')|^{r_1} dy_1 \right)  dx_1 \right)^{\frac{1}{p_1}}.
\]
By the Fubini's theorem, we see that
\[
\begin{split}
I (x', y') & = \left( \int_{\bR}\left(\int_{\bR}|f(y_1,y')|^{q_1} |g(x_1-y_1, x'-y')|^{r_1} dx_1 \right)  dy_1 \right)^{\frac{1}{p_1}} \\
& = \norm{g(\cdot, x'-y')}_{L^{r_1}(\bR)}^{\frac{r_1}{p_1}} \norm{f(\cdot, y')}_{L_{q_1}(\bR)}^{\frac{q_1}{p_1}} \\
&= |g_1(x'-y')|^{\frac{r_1}{p_1}} |f_1(y')|^{\frac{q_1}{p_1}}.
\end{split} 
\]
Therefore,
\[
G(x')  \leq \int_{\bR^{n-1}} \left ( |f_1(y')|^{\frac{q_1}{r_1'} + \frac{q_1}{p_1}} |g_1(x'-y')|^{\frac{r_1}{q_1'} + \frac{r_1}{p_1}}  \right) dy'.
\]
From this, and by using the last two identities in \eqref{prq-k}, we obtain
\[
G(x') = (f_1 * g_1) (x').
\]
Then, by induction hypothesis, we see that
\[
\begin{split} 
\norm{f*g}_{L_{p_1 p_2\cdots p_n}(\bR^n)} & = \norm{G}_{L_{p_2p_3\cdots p_n}(\bR^{n-1})} \leq \norm{g_1}_{L_{r_2r_3\cdots r_n}(\bR^{n-1})} \norm{f_1}_{L_{q_2q_3\cdots q_n}(\bR^{n-1})}\\
&= \norm{g}_{L_{r_1r_2\cdots r_n}(\bR^{n-1})} \norm{f}_{L_{q_1q_2\cdots q_n}(\bR^{n-1})}.
\end{split}
\]
This proves the desired estimate for the case $q_1<\infty$ and $p_1< \infty$. \\ \ \\
{\bf Case II}.  We assume that $p_1 =\infty$ and $q_1 <\infty$. In this case, we observe that $r_1' = q_1 \in [1, \infty)$. In this case, we write
\[
|f*g(x)| \leq  \int_{\bR^{n-1}} \left[ \int_{\bR} |f(y_1, y')| |g(x_1 - y_1, x' - y')| dy_1\right] dy'.
\]
If $r_1<\infty$, as $\frac{1}{q_1} + \frac{1}{r_1} =1$, we apply the H\"{o}lder's inequality for the integration with respect to $y_1$ to obtain
\[
|(f*g)(x)| \leq \int_{\bR^{n-1}} f_1(y') g(x'-y') dy' = (f_1* g_1)(x'), \quad \text{for a.e.} \ x= (x_1, x') \in \bR \times \bR^{n-1},
\]
where $f_1, g_1$ are defined as in \eqref{f-g-1}.  Observe also that the similar estimate can be also done when $r_1 =\infty$. From this, the desired inequality follows by the induction hypothesis as in {\bf Case I}. The proof is of this case therefore completed.\\ \ \\
{\bf Case III}. We are left to consider the case that $q_1 = \infty$. In this case, it follows that $p_1 = \infty$ and $r_1 =1$. By defining
\[
G(x') = \sup_{x_1 \in \bR}| (f* g)(x_1, x')|, \quad f_1 (x') = \sup_{x_1 \in \bR}|f(x_1, x')|, \quad g_1(x') = \int_{\bR} |g(x_1, x')| dx_1
\]
we see that
\[
G(x') \leq (f_1* g_1) (x').
\]
Then, we also obtain the same desired estimate. The proof is then completed. \\
\end{proof}
\begin{remark} Theorem \ref{Young} gives the classical unmixed-norm Young's inequality when $p_1=p_2=\cdots = p_n$ and $q_1 = q_2=\cdots = q_n$.
\end{remark} 
\subsection{Heat equations in mixed norm Lebesgue spaces} This subsection develops estimates of  time decaying rates for solutions of heat equations in mixed norm Lebesgue spaces. We consider the Cauchy problem for the heat equation
\begin{equation} \label{heat.eqn}
\left\{
\begin{array}{cccl}
u_t -\Delta u & =& 0 & \quad \text{in} \quad \bR^n \times (0, \infty),\\
u_{|t =0} & = & u_0 & \quad \text{on} \quad \bR^n.
\end{array}
\right.
\end{equation}
Under some suitable conditions on the initial data $u_0$, it is well known that 
\begin{equation} \label{sol-heat.eqn}
u(x,t) = e^{\Delta t} u_0 (x) = (G_t * u_0) (x), \quad (x,t) \in \bR^n \times (0, \infty),
\end{equation}
is a solution of \eqref{heat.eqn},  where
\[
G_t(x) = \frac{1}{(4\pi t)^{\frac{n}{2}}} e^{-\frac{|x|^2}{4t}}, \quad \quad (x,t) \in \bR^n \times (0, \infty).
\]
The following new and fundamental result on the time decaying rates of the solutions \eqref{sol-heat.eqn} of the heat equation \eqref{heat.eqn} in mixed norm Lebesgue spaces is the main result of this subsection. 
\begin{theorem}[Time decaying of solutions for heat equation in mixed-norm] \label{heat-decay} Let $1 \leq q_k \leq p_k \leq \infty$. There exists a positive constant $N$ depending only on $p_1, p_2,\cdots, p_n, q_1, q_2,\cdots, q_n$ such that for every solution $u(x,t)= e^{\Delta t}u_0(x)$ defined in \eqref{sol-heat.eqn} of the Cauchy problem \eqref{heat.eqn}    with $u_0 \in L_{q_1q_2\cdots q_3} (\bR^n)$, then for $ t >0$
\begin{equation} \label{sol-decay}
\norm{u(\cdot, t)}_{L_{p_1p_2\cdots p_n}(\bR^n)} \leq  N t^{ -\frac{1}{2}  \sum_{k=1}^n  (\frac{1}{q_k} -\frac{1}{p_k} )} \norm{u_0}_{L_{q_1q_2\cdots q_n}(\bR^n)}.
\end{equation}
Moreover, for every $l = 1, 2,\cdots$ and  for $ t >0$
\begin{equation} \label{D-sol-decay}
\norm{D_{x}^l u(\cdot, t)}_{L_{p_1p_2\cdots p_n}(\bR^n)} \leq  N t^{-\frac{l}{2}-\frac{1}{2}  \sum_{k=1}^n  (\frac{1}{q_k} -\frac{1}{p_k} )} \norm{u_0}_{L_{q_1q_2\cdots q_n}(\bR^n)},
\end{equation}
where $D^{l}_{x}$ denotes the $l^{\text{th}}$-derivative in $x$-variable.
\end{theorem}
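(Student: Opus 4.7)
The plan is to combine the mixed-norm Young's inequality (Theorem \ref{Young}) with the tensor-product structure of the heat kernel. Since $u(\cdot,t) = G_t * u_0$, I choose exponents $r_k \in [1,\infty]$ by the Young relation $1 + \frac{1}{p_k} = \frac{1}{q_k} + \frac{1}{r_k}$, which is admissible under the hypothesis $q_k \le p_k$. Applying Theorem \ref{Young} gives
\[
\norm{u(\cdot,t)}_{L_{p_1\cdots p_n}(\bR^n)} \le \norm{G_t}_{L_{r_1\cdots r_n}(\bR^n)}\,\norm{u_0}_{L_{q_1\cdots q_n}(\bR^n)},
\]
so everything reduces to computing the mixed norm of the Gaussian.

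The crucial observation is that $G_t$ factors: $G_t(x) = \prod_{k=1}^n g_t(x_k)$ with $g_t(s) = (4\pi t)^{-1/2} e^{-s^2/(4t)}$. Because of this separability, the iterated integral defining the mixed norm unwinds one variable at a time, giving
\[
\norm{G_t}_{L_{r_1\cdots r_n}(\bR^n)} = \prod_{k=1}^n \norm{g_t}_{L_{r_k}(\bR)}.
\]
A direct change of variables $s = \sqrt{t}\,u$ shows $\norm{g_t}_{L_{r_k}(\bR)} = c_{r_k}\, t^{-\frac12(1-1/r_k)}$ for a constant depending only on $r_k$. Since $1 - \frac{1}{r_k} = \frac{1}{q_k} - \frac{1}{p_k}$, multiplying across $k$ yields the exponent $-\frac12\sum_{k=1}^n(\frac{1}{q_k} - \frac{1}{p_k})$, which is exactly \eqref{sol-decay}.

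For the derivative bound \eqref{D-sol-decay}, I move all derivatives onto the kernel: $D^\alpha_x u(\cdot,t) = (\partial^\alpha G_t) * u_0$ for each multi-index $\alpha$ with $|\alpha| = l$. The factored form persists, $\partial^\alpha G_t(x) = \prod_k \partial_s^{\alpha_k} g_t(x_k)$. Writing $g_t(s) = t^{-1/2} h(s/\sqrt t)$ with $h(u) = (4\pi)^{-1/2}e^{-u^2/4}$ gives $\partial_s^j g_t(s) = t^{-(j+1)/2} h^{(j)}(s/\sqrt t)$, so by the same rescaling
\[
\norm{\partial_s^{\alpha_k} g_t}_{L_{r_k}(\bR)} = C_{\alpha_k, r_k}\, t^{-\alpha_k/2}\, t^{-\frac12(1-1/r_k)}.
\]
Taking the product in $k$ introduces the factor $t^{-|\alpha|/2} = t^{-l/2}$, and Young's inequality together with a finite sum over the multi-indices $|\alpha| = l$ then produces \eqref{D-sol-decay}.

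There is no serious obstacle: the estimate is built by pipelining the mixed-norm Young inequality with one-dimensional Gaussian computations. The only point that deserves care is bookkeeping of the exponents, namely verifying that the Young relation between $p_k, q_k, r_k$ converts $1 - 1/r_k$ into $1/q_k - 1/p_k$, and that the tensor structure of $G_t$ is what allows the iterated mixed norm to collapse to a product of one-variable $L^{r_k}$ norms.
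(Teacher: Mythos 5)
Your proposal is correct and follows essentially the same route as the paper: the mixed-norm Young inequality with exponents chosen via $1+\frac{1}{p_k}=\frac{1}{q_k}+\frac{1}{r_k}$, the tensor factorization $G_t(x)=\prod_k g_t(x_k)$ collapsing the mixed norm to a product of one-dimensional Gaussian $L_{r_k}$ norms, and the same exponent bookkeeping $1-\frac{1}{r_k}=\frac{1}{q_k}-\frac{1}{p_k}$. Your treatment of the derivative bound via the scaling $\partial_s^j g_t(s)=t^{-(j+1)/2}h^{(j)}(s/\sqrt{t})$ for general multi-indices is in fact slightly more complete than the paper, which writes out only $l=1$ and asserts the general case is similar.
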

\begin{proof}  We begin with the proof of \eqref{sol-decay}. For each $k = 1, 2,\cdots, n$, by the assumption that $q_k \leq p_k$, we can find $r_k \in [1, \infty]$ such that
\begin{equation} \label{rk.def}
\frac{1}{p_k} + 1 = \frac{1}{r_k} + \frac{1}{q_k}.
\end{equation}
Then, because $u(x,t) = (G_t * u_0)(x)$, we can use the mixed-norm Young's inequality in Theorem \ref{Young} to see that
\begin{equation} \label{Y-heat}
\norm{u(\cdot, t)}_{L_{p_1p_1\cdots p_n}(\bR^n)} \leq \norm{G_t(\cdot)}_{L_{r_1r_2\cdots r_n}(\bR^n)} \norm{u_0}_{L_{q_1q_2\cdots q_n}(\bR^n)}.
\end{equation}
We now note that we can write $G_t$ as
\[
G_t(x) = g_t(x_1) g_t(x_2) \cdots g_t (x_n) \quad \text{for} \quad x = (x_1,x_2,\cdots, x_n) \in \bR^n, \quad t >0
\]
where $g_t$ is the heat kernel in $\bR$:
\begin{equation} \label{g-t.def}
g_t(s) = \frac{1}{\sqrt{4\pi t}} e^{-\frac{s^2}{4t}}, \quad s \in \bR, \quad t>0.
\end{equation}
Note also that for each $r \in [1, \infty)$ we have
\[
\begin{split}
\norm{g_t(\cdot)}_{L_r(\bR)} & = \frac{1}{\sqrt{4\pi t}}  \left(\int_{\bR} e^{-\frac{rs^2}{4t}} \right)^{\frac{1}{r}} = \frac{1}{\sqrt{4\pi t}} \left(\sqrt{\frac{4t}{r}}\right)^{\frac{1}{r}} \left( \int_{\bR} e^{-z^2} dz \right)^{\frac{1}{r}} \\
& = N(r) t^{-\frac{1}{2}(1- \frac{1}{r})}.
\end{split}
\]
On the other hand, we also see that
\[
\norm{g_t(\cdot)}_{L_\infty(\bR)} \leq N t^{-\frac{1}{2}}, \quad t >0.
\]
Therefore, we conclude that for every $r \in [1, \infty]$
\begin{equation} \label{g-t-norm}
\norm{g_t(\cdot)}_{L_\infty(\bR)} \leq N(r) t^{-\frac{1}{2}(1- \frac{1}{r})}, \quad t >0.
\end{equation}
From this, we infer that
\[
\begin{split}
\norm{G_t(\cdot)}_{L_{r_1r_2\cdots r_n} (\bR^n)} & = \norm{g_t(\cdot)}_{L_{r_1}(\bR)} \norm{g_t(\cdot)}_{L_{r_2}(\bR)} \cdots \norm{g_t(\cdot)}_{L_{r_n}(\bR)} \\
& =N(r_1, r_2,\cdots, r_n) t^{-\frac{1}{2}(n - \sum_{i=1}^n \frac{1}{r_i})}\\
& = N(p_1, p_2,\cdots p_n, q_1, q_2,\cdots q_n) t^{-\frac{1}{2}\sum_{i=1}^n (\frac{1}{q_i} - \frac{1}{p_i})}, \quad t >0,
\end{split}
\]
where we have used \eqref{rk.def} in the last estimate. This last estimate together with \eqref{Y-heat} implies \eqref{sol-decay}.

Next, we prove \eqref{D-sol-decay}. We only demonstrate the proof of \eqref{D-sol-decay} with $l =1$ as  the general case can be done in a similar way. We observe that for each $i = 1, 2,\cdots, n$
\[
D_{x_i}u(x,t) = ([D_{x_i} G_t]* u_0)(x), \quad (x,t) \in \bR^n \times (0, \infty).
\]
Then, by the mixed norm Young's inequality in Theorem \ref{Young}, we have
\begin{equation} \label{Y-heat-2}
\norm{D_{x_i} u(\cdot, t)}_{L_{p_1p_2\cdots p_n}(\bR^n)} \leq \norm{D_{x_i}G_t(\cdot)}_{L_{r_1r_2\cdots r_n}(\bR^n)} \norm{u_0(\cdot)}_{L_{q_1q_2\cdots q_n}(\bR^n)}.
\end{equation}
It remains to estimate the mixed norm $\norm{D_{x_i}G_t(\cdot)}_{L_{r_1r_2\cdot r_n}(\bR^n)}$. Note that
\[
D_{x_i}G_t(x) =- \frac{1}{(4\pi t)^{\frac{n}{2}}} \frac{x_i}{2t} e^{-\frac{|x|^2}{4t}}, \quad x = (x_1,x_2,\cdots, x_n) \in \bR^n\quad t >0. 
\]
Consequently,
\[
D_{x_i}G_t(x) = h_t(x_i)  \left(\displaystyle{ \prod_{k\not= i} g_t(x_k)} \right), \quad x = (x_1,x_2,\cdots, x_n) \in \bR^n\quad t >0,
\]
where  $g_t$ is defined as in \eqref{g-t.def} and
\[
h_t(s) =-\frac{1}{\sqrt{4\pi t}} \frac{s}{2t} e^{-\frac{s^2}{4t}}, \quad s \in \bR \quad \text{and} \quad t>0. 
\]
We observe that 
\[
|h_t(s)| \leq \frac{N}{t} \frac{|s|}{\sqrt{4t}} e^{-\frac{s^2}{4t}}  = \frac{N}{t} |z| e^{-|z|^2}, \quad \text{where} \quad z = \frac{s}{\sqrt{4t}}.
\]
As $|z| e^{-z^2}$ is a bounded function for $z \in \bR$, we conclude that
\[
\norm{h_t}_{L_\infty(\bR)} \leq \frac{N}{t}, \quad t >0.
\]
On the other hand, if $r_i \in [1,\infty)$, we see that
\[
\begin{split}
\norm{h_t}_{L_{r_i}(\bR)} & = \frac{N(r_i)}{t} \left [\int_{\bR} \Big ( \frac{\sqrt{r_i}|s|}{\sqrt{4t}} \Big)^{r_i} e^{\frac{-r_i s^2}{4t}} ds \right]^{\frac{1}{r_i}} \\
& =  N(r_i) t^{-\frac{1}{2} -  \frac{1}{2}(1 - \frac{1}{r_i})} \left[ \int_{\bR} |z|^{r_i} e^{-z^2} dz \right]^{\frac{1}{r_i}} \\
& = N(r_i) t^{-\frac{1}{2} -  \frac{1}{2}(1 - \frac{1}{r_i})} .
\end{split}
\]
Therefore, we conclude that for every $r_i \in [1, \infty]$ 
\[
\norm{h_t}_{L_{r_i}(\bR)} = N (r_i)  t^{-\frac{1}{2} -  \frac{1}{2}(1 - \frac{1}{r_i})}, \quad t >0.
\]
From this estimate and \eqref{g-t-norm}, we see that
\[
\begin{split}
\norm{D_{x_i} G_t}_{L_{r_1r_2\cdots r_n}(\bR^n)} & = \norm{h_t}_{L_{r_i}(\bR)} \prod_{k\not=i} \norm{g_t}_{L_{r_k}(\bR)} \\
& \leq N(r_1, r_2,\cdots, r_n) t^{-\frac{1}{2} -\frac{1}{2} (n- \sum_{k=1}^n \frac{1}{r_k})}.
\end{split}
\]
From this, and by using \eqref{rk.def}, we infer that
\[
\norm{D_{x_i} G_t}_{L_{r_1r_2\cdots r_n}(\bR^n)} \leq N(r_1, r_2,\cdots, r_n) t^{-\frac{1}{2} -\frac{1}{2} \sum_{k=1}^n ( \frac{1}{q_k} - \frac{1}{p_k})}.
\]
This last estimate and \eqref{Y-heat-2} imply \eqref{D-sol-decay} with $l=1$. The proof of the lemma is complete.
\end{proof}
Next, we introduce and prove the following simple lemma on the continuity property of the solutions of the heat equation \eqref{heat.eqn} in mixed norm spaces. The result will be useful in the paper.
\begin{lemma} \label{cont-lemma} For each $k = 1, 2, \cdots, n$, let $p_k \in [1, \infty)$. Assume that $u_0 \in L_{p_1p_2\cdots p_n}(\bR^n)$. Let $u(x,t) = e^{\Delta t} u_0$ be the solution of the heat equation \eqref{heat.eqn} defined in \eqref{sol-heat.eqn}. Then,  $u: C([0, \infty), L_{p_1p_2\cdots p_n}(\bR^n))$ and
\begin{equation} \label{limit-zero}
\lim_{t\rightarrow 0^+} \norm{u(\cdot, t) - u_0}_{L_{p_1p_2\cdots p_n}(\bR^n)} =0.
\end{equation}
\end{lemma}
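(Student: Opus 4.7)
\textbf{Proof plan for Lemma \ref{cont-lemma}.}
The plan is to combine three ingredients: uniform boundedness of $e^{\Delta t}$ in the mixed-norm space, the single-time limit \eqref{limit-zero} at $t=0$, and the semigroup identity $e^{\Delta(t+s)}=e^{\Delta t}e^{\Delta s}$. First, Theorem \ref{Young} applied with $r_k=1$ and $q_k=p_k$ for every $k$, together with $\|G_t\|_{L_1(\bR^n)}=1$, yields the uniform contraction bound
$$\|e^{\Delta t} f\|_{L_{p_1 p_2 \cdots p_n}(\bR^n)}\leq \|f\|_{L_{p_1 p_2 \cdots p_n}(\bR^n)}, \quad t>0.$$

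Next, to establish \eqref{limit-zero}, I would run a standard $3\epsilon$-argument against a dense subclass. Since every $p_k$ is finite, $C_c^{\infty}(\bR^n)$ is dense in $L_{p_1 p_2 \cdots p_n}(\bR^n)$ (one iterates, variable by variable, the classical one-dimensional density). Fix $\epsilon>0$ and pick $\phi\in C_c^{\infty}$ with $\|u_0-\phi\|_{L_{p_1 p_2 \cdots p_n}}\leq \epsilon$; the triangle inequality and the contraction bound from Step 1 reduce matters to showing $\|e^{\Delta t}\phi-\phi\|_{L_{p_1 p_2 \cdots p_n}}\to 0$ for a fixed $\phi\in C_c^{\infty}$. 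For this I would use the pointwise identity $(e^{\Delta t}\phi-\phi)(x)=\int_{\bR^n} G_t(y)[\phi(x-y)-\phi(x)]\,dy$ and the iterated one-dimensional Minkowski integral inequality to obtain
$$\|e^{\Delta t}\phi-\phi\|_{L_{p_1 p_2 \cdots p_n}}\leq \int_{\bR^n}G_t(y)\,\omega(y)\,dy, \quad \omega(y):=\|\phi(\cdot-y)-\phi\|_{L_{p_1 p_2 \cdots p_n}}.$$
Here $\omega$ is bounded by $2\|\phi\|_{L_{p_1 p_2 \cdots p_n}}$ and, by the uniform continuity of $\phi$ combined with dominated convergence, $\omega(y)\to 0$ as $y\to 0$. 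The substitution $y=2\sqrt{t}\,z$ turns the right-hand side into $\pi^{-n/2}\int_{\bR^n} e^{-|z|^2}\omega(2\sqrt{t}\,z)\,dz$, which tends to $0$ as $t\to 0^+$ by dominated convergence with majorant $2\|\phi\|_{L_{p_1 p_2 \cdots p_n}}e^{-|z|^2}$.

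Finally, for continuity at any $t_0>0$, the semigroup identity gives $u(\cdot,t_0+h)-u(\cdot,t_0)=e^{\Delta t_0}(e^{\Delta h}u_0-u_0)$ for $h>0$ and $u(\cdot,t_0)-u(\cdot,t_0-h)=e^{\Delta(t_0-h)}(e^{\Delta h}u_0-u_0)$ for $0<h<t_0$; combined with the contraction bound from Step 1, both expressions are controlled in norm by $\|e^{\Delta h}u_0-u_0\|_{L_{p_1 p_2 \cdots p_n}}$, which tends to $0$ by Step 2. The only potentially delicate points are the mixed-norm version of Minkowski's integral inequality and the density of $C_c^{\infty}$ in the mixed-norm space; both follow immediately by applying the corresponding one-dimensional results coordinate by coordinate, so no genuine obstacle remains.
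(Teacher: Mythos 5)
Your proposal is correct, but its key step takes a genuinely different route from the paper's. Both proofs share the outer scaffold: approximate $u_0$ by a nice compactly supported function, use the uniform boundedness of $e^{\Delta t}$ on $L_{p_1p_2\cdots p_n}(\bR^n)$ (the paper quotes Theorem \ref{heat-decay}; you get it from Theorem \ref{Young} with $r_k=1$, which in fact yields the sharper contraction constant $1$), and conclude by a triangle-inequality argument. Where you diverge is in treating the approximant. The paper picks $p>\max_k p_k$, sets $\frac{1}{q_k}=\frac{1}{p_k}-\frac{1}{p}$, and splits the mixed norm of $e^{\Delta t}\tilde u_0-\tilde u_0$ by a H\"older-type inequality into an unmixed $L_p(\bR^n)$ factor, which tends to zero by the \emph{classical} strong continuity of the heat semigroup on $L_p$, times a mixed $L_{q_1q_2\cdots q_n}$ factor, which stays bounded since $\tilde u_0$ is bounded with compact support. (As literally written, the paper's splitting has a homogeneity glitch, since a product of two norms of the same function cannot dominate a single norm without interpolation exponents, but the idea is easily repaired.) You instead prove the convergence directly in the mixed norm via the mixed-norm Minkowski integral inequality and the translation modulus $\omega(y)=\norm{\phi(\cdot-y)-\phi}_{L_{p_1p_2\cdots p_n}(\bR^n)}$, followed by the rescaling $y=2\sqrt{t}\,z$ and dominated convergence; this is self-contained, avoiding any appeal to the classical $L_p$ result, and you also carry out the continuity at $t_0>0$ explicitly through the semigroup identity, which the paper dismisses as ``similar.'' The only soft spots in your write-up are the density of $C_c^\infty$ in $L_{p_1p_2\cdots p_n}(\bR^n)$ for finite exponents, which is true (Benedek--Panzone) but for which ``iterating one-dimensional density'' deserves a word of care (e.g., go through simple functions, or truncation plus mollification, noting the mollification limit is exactly your Step-2 computation), and the mixed-norm Minkowski inequality, which indeed follows by induction on variables because the mixed norm is an iterated norm; neither is a genuine gap.
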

\begin{proof}  We only need to prove \eqref{limit-zero}, as the proof of the continuity of $u$ at $t_0>0$ can be done similarly.  Let $\epsilon >0$, by using the truncation and a multiplication by a suitable cut-off function, we can find a bounded compactly support function $\tilde{u}_0$ such that
\[
\norm{u_0 - \tilde{u}_0}_{L_{p_1p_2\cdots p_n}(\bR^n)} \leq \frac{\epsilon}{4N_0},
\] 
where $N_0 = N_0(n, p_1, p_2,\cdots, p_n) >1$ is the number defined in Theorem \ref{heat-decay}. Precisely, by  Theorem \ref{heat-decay}, we have
\[
\norm{e^{\Delta t} (u_0 -\tilde{u}_0)}_{L_{p_1p_2\cdots p_n}(\bR^n)} \leq N_0 \norm{u_0 - \tilde{u}_0}_{L_{p_1p_2\cdots p_n}(\bR^n)} \leq \frac{\epsilon}{4}.
\]
From the previous two estimates, we see that
\begin{equation}\label{lim-appr}
\norm{e^{\Delta t} (u_0 -\tilde{u}_0) - (u_0 -\tilde{u}_0)}_{L_{p_1p_2\cdots p_n}(\bR^n)} \leq \frac{\epsilon}{4} + \frac{\epsilon}{4N} \leq \frac{\epsilon}{2}.
\end{equation}
Our next goal is to show that
\[
\lim_{t\rightarrow 0^+} \norm{e^{\Delta t} \tilde{u}_0 - \tilde{u}_0}_{L_{p_1p_2\cdots p_n} (\bR^n)} =0.
\]
Take $p > \max\{p_1, p_2,\cdots, p_n\}$ and choose the numbers  $q_k \in (p_k, \infty)$ such that
\[
 \frac{1}{q_k} = \frac{1}{p_k} - \frac{1}{p}, \quad k = 1, 2,\cdots, n.
\]
Then, by applying the H\"{o}lder's inequality repeatedly for each integration with respect to each variable $x_k$, we see that
\[
\begin{split}
& \norm{e^{\Delta t} \tilde{u}_0 - \tilde{u}_0}_{L_{p_1p_2\cdots p_n} (\bR^n)}\\
& \leq \norm{e^{\Delta t} \tilde{u}_0 - \tilde{u}_0}_{L_{p} (\bR^n)} \norm{e^{\Delta t} \tilde{u}_0 - \tilde{u}_0}_{L_{q_1q_2\cdots q_n} (\bR^n)} \\
& \leq \norm{e^{\Delta t} \tilde{u}_0 - \tilde{u}_0}_{L_{p} (\bR^n)} \Big [\norm{e^{\Delta t} \tilde{u}_0}_{L_{q_1q_2\cdots q_n} (\bR^n)}+ \norm{\tilde{u}_0}_{L_{q_1q_2\cdots q_n} (\bR^n)} \Big] \\
& \leq  N \norm{e^{\Delta t} \tilde{u}_0 - \tilde{u}_0}_{L_{p} (\bR^n)} \norm{\tilde{u}_0}_{L_{q_1q_2\cdots q_n} (\bR^n)}.
\end{split}
\]
Observe that as $\tilde{u}_0$ is bounded and compactly supported, $\norm{\tilde{u}_0}_{L_{q_1q_2\cdots q_n} (\bR^n)} <\infty$. Therefore,
\[
 \norm{e^{\Delta t} \tilde{u}_0 - \tilde{u}_0}_{L_{p_1p_2\cdots p_n} (\bR^n)}\leq \tilde{N}\norm{e^{\Delta t} \tilde{u}_0 - \tilde{u}_0}_{L_{p} (\bR^n)}  \rightarrow 0 \quad \text{as} \quad t\rightarrow 0^+,
\]
where in the last assertion, we used the classical result of the continuity of the heat flow in $L_p(\bR^n)$ and the fact that $\tilde{u}_0 \in L_{p}(\bR^n)$.  From this and \eqref{lim-appr}, we conclude that there is $\delta = \delta(\epsilon) >0$ such that
\[
\norm{e^{\Delta t} u_0  - u_0 }_{L_{p_1p_2\cdots p_n}(\bR^n)}  \leq \epsilon, \quad \forall \ t \in (0, \delta_0).
\]
This proves \eqref{limit-zero} as desired.
\end{proof}
\begin{remark} It is interesting to note that Theorem \ref{heat-decay} shows the persistence of the anisotropic properties of the initial data under the evolution of the heat equation.  This phenomenon seems to be new. Theorem \ref{heat-decay} and Lemma \ref{cont-lemma} recover the classical results when $p_1=p_2=\cdots = p_n$ and $q_1 = q_2=\cdots = q_n$.
\end{remark} 
\begin{remark} \label{Besov-space} For given numbers $p_1, p_2,\cdots, p_n \in (1,\infty)$ that satisfy \eqref{p-cond}, if $u_0 \in L_{p_1p_2\cdots p_n} (\bR^n)$, by Theorem \ref{heat-decay}, we see that
\[
t^{\frac{1}{2} (1-\frac{n}{p_0})} \norm{e^{\Delta t} u_0}_{L_{p_0}(\bR^n)} \leq N  \norm{u_0}_{L_{p_1p_2\cdots p_n}(\bR^n)},
\]
for $p_0 \in (\max\{p_1, p_2,\cdots, p_n, n\}, \infty)$ and for $N = N(n, p_0, p_1, p_2,\cdots, p_n)$. Then, it follows from the characterization of Besov spaces with negative regularity (see \cite[Theorem 2.34, p. 72]{Chemin}, or \cite[eqn (8.6), p. 177]{L-R}, and also \cite{Cannone, Cannone-2}) that $u_0 \in \dot B_{p_0, \infty}^{-1+\frac{n}{p_0}}(\bR^n)$ with its norm is defined as in \eqref{Besov-norm}
\[
\norm{u_0}_{\dot B_{p_0, \infty}^{-1+\frac{n}{p_0}}(\bR^n)} \approx \sup_{t>0}t^{\frac{1}{2} (1-\frac{n}{p_0})} \norm{e^{\Delta t} u_0}_{L_{p_0}(\bR^n)} <\infty.
\]
In particular, it follows from this and \cite[eqn (23)]{Koch-Tataru} that $u_0 \in \textup{BMO}^{-1}(\bR^n)$.
 \end{remark}
\subsection{Helmholtz-Leray projection in mixed-norm Lebesgue spaces} Let $\mathbb{P} = \text{Id} -\nabla \Delta^{-1}\nabla \cdot$ be the Helmholtz-Leray projection onto the divergence-free vector fields. This subsection proves that 
\[
\norm{\bP(f)}_{L_{p_1p_2\cdots p_n}(\bR^n)} \leq N \norm{f}_{L_{p_1p_2\cdots p_n}(\bR^n)},
\]
for every $f \in L_{p_1p_2\cdots p_n}(\bR^n)^n$ and for $p_1, p_2,\cdots, p_n \in (1, \infty)$.  This estimate is an important ingredient in our paper. To achieve it, we need to recall the following definition of Muckenhoupt  $A_q(\bR^n)$-class of weights, which is needed for the proof of Theorem \ref{Riesz} below. For each $q \in (1, \infty)$, a non-negative, locally integrable function $\omega: \mathbb{R}^n \rightarrow \mathbb{R}$ is said to be in the Muckenhoupt  $A_q(\bR^n)$-class of weights if
\[
[\omega]_{A_q} := \displaystyle{\sup_{R>0 , x_0 \in \bR^n} \left(\frac{1}{|B_R(x_0)|} \int_{B_R(x_0)} \omega(x) dx \right) \left(\frac{1}{B_R(x_0)} \int_{B_R(x_0)} \omega(x)^{-\frac{1}{q-1}} dx  \right)^{q-1}} < \infty,
\]
where $B_R(x_0)$ denotes the ball in $\bR^n$ of radius $R$ centered at $x_0 \in \bR^n$.  In the following, for each given $p \in [1, \infty)$ and each given weight $\omega: \bR^n \rightarrow \bR$, a measurable function $f: \bR^n \rightarrow \bR$ is said to be in the weighted Lebesgue space $L_p(\bR^n, \omega)$ if its norm
\[
\norm{f}_{L_{p}(\bR^n, \omega)} = \left( \int_{\bR^n} |f(x)|^p \omega(x) dx \right)^{\frac{1}{p}} < \infty.
\]
We also recall the following amazing result from \cite[Theorem 6.2]{Krylov-survey}, which is a beautiful application of  the Rubio De Francia extrapolation theory (see \cite{David} for instance).
\begin{theorem} \label{extrapolation-thrm} Let $p_k \in (1,\infty)$ for all $k = 1, 2,\cdots, n$.  Then, there exists a constant $K_0 = K_0(n, p_1, p_2, \cdots, p_n) \geq 1$ such that the following holds true. For a pair of given measurable functions $f, g: \bR^n \rightarrow \bR$ such that if
\[
\norm{f}_{L_{p_1}(\bR^n, \omega)} \leq \norm{g}_{L_{p_1}(\bR^n, \omega)}
\]
for every $\omega \in A_{p_1}$ with $[\omega]_{A_{p_1}} \leq K_0$, then we have
\[
\norm{f}_{L_{p_1p_2\cdots p_n}(\bR^n)} \leq 4^n\norm{g}_{L_{p_1p_2\dots p_n}(\bR^n)}.
\]
\end{theorem}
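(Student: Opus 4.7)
The plan is to reduce the desired mixed-norm inequality to the hypothesized $L^{p_1}(\omega)$-inequality by constructing, for each admissible test function $h$ in the iterated dual of $L_{p_1p_2\cdots p_n}$, a tensor-product weight $\omega$ on $\bR^n$ that (i) dominates $h$ pointwise, (ii) lies in $A_{p_1}(\bR^n)$ with characteristic at most $K_0$, and (iii) has iterated dual norm controlled by a universal multiple of that of $h$. The construction is an iterated one-dimensional Rubio de Francia algorithm applied in the outer variables $x_2,\ldots,x_n$, and it is the mixed-norm analogue of classical Rubio de Francia extrapolation.

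First, via iterated duality I would express $\norm{f}_{L_{p_1 p_2\cdots p_n}(\bR^n)}^{p_1}$ as a supremum of $\int_{\bR^{n-1}} F(x_2,\ldots,x_n)\, h(x_2,\ldots,x_n)\,dx_2\cdots dx_n$, where $F(x_2,\ldots,x_n):=\int_{\bR}|f(x_1,\ldots,x_n)|^{p_1}\,dx_1$ and $h\ge 0$ ranges over the unit ball of the iterated conjugate space $L^{(p_2/p_1)'}(dx_2;\cdots;L^{(p_n/p_1)'}(dx_n))$. When some ratio $p_k/p_1$ falls below $1$, a preliminary application of the classical (unmixed) Rubio de Francia extrapolation theorem moves the hypothesis from exponent $p_1$ down to an auxiliary exponent $p_0\in(1,\min_k p_k]$ at a universal cost depending only on $n$ and $p_1,\ldots,p_n$, so that in the reduced problem all iterated ratios exceed $1$ and the duality is legitimate.

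Second, and this is the heart of the argument, given any admissible $h$ I construct
\[
\omega(x)=\omega_2(x_2)\,\omega_3(x_3)\cdots\omega_n(x_n)
\]
by applying, one variable at a time, a one-dimensional Rubio de Francia operator $R_k u=\sum_{j\ge 0}(M^{(k)})^j u/(2c_k)^j$ in the $x_k$-direction, where $M^{(k)}$ is the one-dimensional Hardy-Littlewood maximal operator in $x_k$ and $c_k$ is a constant slightly larger than its norm on the relevant one-dimensional slice space. Each $R_k$ yields an $A_1(\bR)$-weight in $x_k$ with characteristic at most $2c_k$ and inflates the slicewise norm by a factor of at most $2$. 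Since a tensor product of one-dimensional $A_1$-weights belongs to the strong rectangle-based class $A_1^*(\bR^n)$ with characteristic equal to the product of the one-dimensional characteristics, and $A_1^*(\bR^n)\subset A_{p_1}(\bR^n)$ with only a dimension-dependent loss (every Euclidean ball is contained in an axis-aligned cube of comparable measure), the tensor product $\omega$ satisfies $[\omega]_{A_{p_1}(\bR^n)}\le K_0$ when $K_0=K_0(n,p_1,\ldots,p_n)$ is chosen large enough.

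The finish is then immediate: properties (i) and (ii) together with the hypothesis give $\int_{\bR^n}|f|^{p_1}h\,dx\le\int|f|^{p_1}\omega\,dx\le\int|g|^{p_1}\omega\,dx$, and iterated H\"older's inequality with (iii) and $\norm{h}\le 1$ delivers $\int|g|^{p_1}\omega\,dx\le 4^n\norm{g}_{L_{p_1\cdots p_n}}^{p_1}$; taking the supremum over admissible $h$ closes the argument. The principal obstacle lies in Step 2: calibrating the normalizations $c_k$ simultaneously so that the product of the one-dimensional $A_1$-characteristics fits inside the single budget $K_0$ uniformly in $h$, while the per-iteration inflation of the iterated dual norm stays at the factor of $2$ needed so that the accumulated losses from the $n-1$ Rubio de Francia iterations and the extrapolation preprocessing land within the advertised constant $4^n$.
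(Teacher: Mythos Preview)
The paper does not prove this theorem: it is quoted verbatim as a known result from \cite[Theorem 6.2]{Krylov-survey} and is explicitly described there as ``a beautiful application of the Rubio de Francia extrapolation theory.'' There is therefore no in-paper proof against which to compare your proposal.

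That said, your sketch is a reasonable reconstruction of the type of argument that underlies the cited result, and it correctly identifies Rubio de Francia extrapolation as the engine. A few cautions if you intend to make this rigorous. First, your preprocessing step --- passing from exponent $p_1$ to an auxiliary $p_0\le\min_k p_k$ so that all ratios $p_k/p_0$ exceed $1$ --- already invokes extrapolation, and you must check that the weighted hypothesis for \emph{all} $\omega\in A_{p_1}$ with $[\omega]_{A_{p_1}}\le K_0$ really transfers to a weighted hypothesis at exponent $p_0$ with a controlled $A_{p_0}$-constant; classical extrapolation goes the other direction, so this needs care. Second, the iterated Rubio de Francia construction you describe produces weights depending on all of $x_2,\ldots,x_n$ jointly (since at each stage the input $h$ still carries the remaining variables), not a clean tensor product $\omega_2(x_2)\cdots\omega_n(x_n)$ as you write; what actually matters is that the resulting $\omega$ is $A_1$ in each variable separately with uniform constants, which does place it in the strong $A_1^*$ class and hence in $A_{p_1}(\bR^n)$. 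Third, landing exactly on the constant $4^n$ requires tight bookkeeping that your sketch does not supply; the $n-1$ iterations each costing a factor of $2$ give $2^{n-1}$ on the dual side, and matching this to $4^n$ after the $p_1$-th root and the preprocessing is not automatic.
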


Now, we begin with the following important result on the boundedness of the Riesz  transform in mixed-norm Lebesgue spaces. Interested readers may find \cite[Corollary 2.7]{Dong-Kim} and \cite[Lemma 2.1]{TP} for other interesting related results in mixed-norm spaces.
\begin{theorem} \label{Riesz} For any $j = 1, 2,\cdots, n$ and any $p_1, p_2,\cdots, p_n \in (1, \infty)$, there exists a positive constant $N = N(p_1, p_2, \cdots, p_n, n)$ such that
\[
\norm{\mathscr{R}_j(f)}_{L_{p_1p_2\cdots p_n}(\bR^n)} \leq N \norm{f}_{L_{p_1p_2\cdots p_n}(\bR^n)}
\]
for every $f \in {L_{p_1p_2\cdots p_n}(\bR^n)}$, where $\mathscr{R}_j$ is the $j^{\textup{th}}$-Riesz transform defined by $\mathscr{R}_j (f) = \partial_{x_j} (-\Delta)^{-\frac{1}{2}} f$.
\end{theorem}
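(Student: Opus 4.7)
The plan is to deduce this from the extrapolation theorem (Theorem \ref{extrapolation-thrm}) together with the classical weighted $L^p$ theory of singular integrals. Since the Riesz transform $\mathscr{R}_j$ is a standard Calder\'on--Zygmund operator, the textbook theory (see, e.g., Stein or Grafakos) gives: for every $p \in (1, \infty)$ and every $\omega \in A_p(\bR^n)$,
\[
\norm{\mathscr{R}_j(f)}_{L_p(\bR^n, \omega)} \leq C(n, p, [\omega]_{A_p}) \norm{f}_{L_p(\bR^n, \omega)},
\]
and this constant depends on $\omega$ only through the characteristic $[\omega]_{A_p}$ in a monotone (in fact polynomial) way.

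The first step is to apply this weighted estimate with $p = p_1$, restricted to the class of weights $\omega \in A_{p_1}(\bR^n)$ satisfying $[\omega]_{A_{p_1}} \leq K_0$, where $K_0 = K_0(n, p_1, \dots, p_n)$ is the constant produced by Theorem \ref{extrapolation-thrm}. Since $[\omega]_{A_{p_1}} \le K_0$, the weighted bound reads
\[
\norm{\mathscr{R}_j(f)}_{L_{p_1}(\bR^n, \omega)} \leq C_0 \norm{f}_{L_{p_1}(\bR^n, \omega)},
\]
with $C_0 = C_0(n, p_1, K_0) = C_0(n, p_1, \dots, p_n)$ independent of the particular $\omega$.

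The second step is to set $F = \mathscr{R}_j(f)$ and $G = C_0 f$, so the previous display becomes $\norm{F}_{L_{p_1}(\bR^n,\omega)} \le \norm{G}_{L_{p_1}(\bR^n,\omega)}$ for every $\omega \in A_{p_1}$ with $[\omega]_{A_{p_1}} \le K_0$. By the hypothesis of Theorem \ref{extrapolation-thrm}, this yields
\[
\norm{\mathscr{R}_j(f)}_{L_{p_1 p_2\cdots p_n}(\bR^n)} \leq 4^n \norm{G}_{L_{p_1 p_2\cdots p_n}(\bR^n)} = 4^n C_0 \norm{f}_{L_{p_1 p_2\cdots p_n}(\bR^n)},
\]
which is precisely the desired inequality with $N = 4^n C_0$.

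There is no real obstacle here beyond invoking the two cited results; the only minor point to be careful about is that one should apply the weighted estimate to, say, the truncated singular integrals to ensure everything is well-defined (and then pass to the limit), or equivalently restrict initially to Schwartz functions and extend by density using the resulting bound. The strength of the argument is that it reduces the mixed-norm question entirely to the classical (one-parameter) $A_{p_1}$ weighted theory, bypassing any direct calculation in the iterated norm.
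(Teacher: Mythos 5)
Your proposal is correct and follows essentially the same route as the paper: both reduce the mixed-norm bound to the classical weighted estimate $\norm{\mathscr{R}_j(f)}_{L_{p_1}(\bR^n,\omega)} \leq N\norm{f}_{L_{p_1}(\bR^n,\omega)}$ for $\omega \in A_{p_1}$ with $[\omega]_{A_{p_1}} \leq K_0$, and then invoke Theorem \ref{extrapolation-thrm}. The paper handles the well-definedness point by first approximating $f$ by bounded compactly supported functions, which matches the density remark at the end of your argument.
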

\begin{proof} We plan to apply Theorem \ref{extrapolation-thrm}.  For given $p_1, p_2, \cdots, p_n \in (1, \infty)$, let $K_0$ be as in Theorem \ref{extrapolation-thrm}. By using the truncation and a multiplication with suitable cut-off functions, we can approximate $f \in L_{p_1p_2\cdots p_n} (\bR^n)$ by a sequence of bounded compactly supported functions. Therefore,  we may assume that $f$ is bounded and compactly supported in $\bR^n$. Without loss of generality, we can also assume that $p_1 =\min\{p_1, p_2,\cdots, p_n\}$.  Under these assumptions, we see that $f \in L_{p_1}(\bR^n,\omega)$  for every weight  $\omega \in A_{p_1}$.  Then,  since $p_1 \in (1, \infty)$, by the classical Calder\'{o}n-Zygmund theory (see \cite{David, RF} for instance),  there exists a constant $N = N(p_1, n, K_0)$ such that 
\begin{equation}  \label{Lp-1.weight}
\norm{\mathscr{R}_j(f)}_{L_{p_1}(\bR^n, \omega)} \leq N \norm{f}_{L_{p_1}(\bR^n, \omega)},
\end{equation}
for every $\omega \in A_{p_1}$ with $[\omega]_{A_{p_1}} \leq K_0$.  From \eqref{Lp-1.weight} and  Theorem \ref{extrapolation-thrm},  we infer that
 \[
 \norm{\mathscr{R}_j(f)}_{L_{p_1p_2\cdots p_n}(\bR^n)} \leq 4^n N  \norm{f}_{L_{p_1 p_2\cdots p_n}(\bR^n)}.
\]
This is the desired estimate and the proof is therefore completed.
\end{proof}
The following consequence of Theorem \ref{Riesz} gives the boundedness of the Helmholtz-Leray projection in mixed norm Lebesgue spaces, which is an important ingredient in the paper.
\begin{corollary} \label{H-L-projection} Let $\mathbb{P} = \textup{Id} -\nabla \Delta^{-1}\nabla \cdot$ be the Helmholtz-Leray projection onto the divergence-free vector fields. Let $p_1, p_2,\cdots, p_n \in (1, \infty)$. Then, one has
\[
\norm{\bP(f)}_{L_{p_1p_2\cdots p_n}(\bR^n)} \leq N \norm{f}_{L_{p_1p_2\cdots p_n}(\bR^n)},
\]
for every $f \in L_{p_1p_2\cdots p_n}(\bR^n)^n$, where $N = N(p_1, p_2,\cdots, p_n, n)$ is a positive constant.
\end{corollary}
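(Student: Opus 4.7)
The plan is to reduce the claim to a double application of Theorem \ref{Riesz}, which has already handled the hard analytic work (extrapolation from the $A_{p_1}$-weighted theory to the mixed-norm setting). The only thing to do is to rewrite the Helmholtz--Leray projection in terms of Riesz transforms.

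First I would observe that for a vector field $f=(f_1,\dots,f_n)\in L_{p_1p_2\cdots p_n}(\bR^n)^n$, the $j$-th component of $\bP f$ is
\[
(\bP f)_j \;=\; f_j \;-\; \partial_{x_j}\Delta^{-1}\sum_{k=1}^n \partial_{x_k} f_k \;=\; f_j \;+\; \sum_{k=1}^n \sR_j \sR_k f_k,
\]
where I have used the identity $\partial_{x_j}\Delta^{-1}\partial_{x_k} = -\partial_{x_j}(-\Delta)^{-1/2}\,\partial_{x_k}(-\Delta)^{-1/2} = -\sR_j\sR_k$, valid at the level of Fourier multipliers (and therefore on Schwartz functions, from which it extends to $L_{p_1p_2\cdots p_n}$ once the boundedness of the Riesz transforms is in place).

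Next, I would apply Theorem \ref{Riesz} iteratively. Since each $\sR_k$ is bounded on $L_{p_1p_2\cdots p_n}(\bR^n)$ with some constant $N_1=N_1(n,p_1,\dots,p_n)$, applying it twice gives $\|\sR_j\sR_k f_k\|_{L_{p_1p_2\cdots p_n}}\le N_1^2\,\|f_k\|_{L_{p_1p_2\cdots p_n}}$. Summing over $k$ and using the triangle inequality,
\[
\|(\bP f)_j\|_{L_{p_1p_2\cdots p_n}(\bR^n)} \;\le\; \|f_j\|_{L_{p_1p_2\cdots p_n}(\bR^n)} + N_1^2 \sum_{k=1}^n \|f_k\|_{L_{p_1p_2\cdots p_n}(\bR^n)}.
\]
Summing over $j$ (or taking the maximum component, depending on which norm convention is used on vector fields) yields the claimed estimate with $N=N(n,p_1,\dots,p_n)$.

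There is essentially no obstacle here: the whole content has been packaged into Theorem \ref{Riesz}. The only small technical care I would take is the justification of the algebraic identity $(\bP f)_j = f_j+\sum_k\sR_j\sR_k f_k$ on general $L_{p_1p_2\cdots p_n}$ vector fields. I would first verify it on Schwartz functions via the Fourier transform (where $\bP$ is the standard multiplier $\delta_{jk}-\xi_j\xi_k/|\xi|^2$ and $\sR_j\sR_k$ has symbol $-\xi_j\xi_k/|\xi|^2$), and then use density together with the $L_{p_1p_2\cdots p_n}$-boundedness of $\sR_j\sR_k$ from Theorem \ref{Riesz} to extend the identity, thereby defining $\bP$ on the mixed-norm space and simultaneously proving its boundedness.
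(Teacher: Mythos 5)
Your proposal is correct and follows essentially the same route as the paper: the paper likewise writes $\bP(f)_k = f_k + \mathscr{R}_k \sum_{j=1}^n \mathscr{R}_j f_j$ and concludes directly from Theorem \ref{Riesz} by iterated application and the triangle inequality. Your additional care in verifying the multiplier identity on Schwartz functions and extending by density is a detail the paper leaves implicit, but it does not change the argument.
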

\begin{proof} Note that with $f = (f_1, f_2,\cdots f_n) \in L_{p_1p_2\cdots p_n}(\bR^n)^n$, we have  $\bP(f) = (\bP(f)_1, \bP(f)_2,\cdots,  \bP(f)_n)$ with
\[
\begin{split}
\bP(f)_k  
& = f_k + \mathscr{R}_{k}  \sum_{j=1}^n \mathscr{R}_j f_j, \quad k = 1, 2\cdots, n,
\end{split}
\]
where $\mathscr{R}_j$ is the $j^{\textup{th}}$-Riesz transform. Therefore, it follows from Theorem \ref{Riesz} that
\[
\norm{\bP(f)}_{L_{p_1p_2\cdots p_n}(\bR^n)} \leq N \norm{f}_{L_{p_1p_2\cdots p_n}(\bR^n)}
\]
which is our desired estimate.
\end{proof}
\section{Navier-Stokes equations in critical mixed-norm Lebesgue spaces} \label{proof-main-result}

This section provides the proof  of Theorem \ref{main-thrm}. We follow the approach introduced in \cite{Fu-Ka-1, Fu-Ka, Kato} and in \cite{Koch-Tataru, Meyer}.  Recall that $\bP$ denotes the Helmholtz-Leray projection which is defined in Corollary \ref{H-L-projection}. By applying $\bP$ on the system \eqref{NS.eqn}, we see that the system \eqref{NS.eqn} is recasted in the abstract way as the following
\begin{equation} \label{abstract-NS.eqn}
\left\{
\begin{array}{cccl}
u_t + \mathcal{A} u + F(u,u) & = & 0 & \quad \text{in} \quad \bR^n \times (0, \infty),\\
u(\cdot, 0) & = & a_0(\cdot)& \quad \text{on} \quad \bR^n,
\end{array}
\right.
\end{equation}
where $\mathcal{A} = -\bP \Delta = -\Delta \bP$ and
\begin{equation} \label{F.def}
F(u,v) = \bP((u\cdot \nabla) v).
\end{equation}
By the Duhamel's principle, the system \eqref{abstract-NS.eqn} is then converted to the following integral equation
\begin{equation} \label{u-abstract.eqn}
u = u_0 + \G (u,u),
\end{equation}
where
\begin{equation} \label{G.def}
u_0(t) = e^{-\mathcal{A} t} a_0, \quad \text{and} \quad \G (u, v)(t) = -\int_{0}^t e^{-(t-s)\A} F(u(s), v(s)) ds.
\end{equation}
To proceed, we need several estimates. We begin with the following lemma on the time decaying properties  for the semi-group $e^{-\A t}$ in mixed norm Lebesgue spaces.
\begin{lemma} \label{linear-est} For  each $k = 1, 2,\cdots, n$, let $1 < p_k \leq q_k < \infty$ be given numbers. Also,  let $\sigma \geq 0$ be defined by
\[
\sigma = \sum_{k=1}^n \Big [ \frac{1}{p_k} - \frac{1}{q_k}\Big].
\]
\begin{itemize}
\item[\textup(i)] There exists a number $N$ depending only on $n, p_1, p_2,\cdots, p_n$ and $q_1, q_2,\cdots, q_n$ such that
\begin{equation} \label{linear-decay.est}
\begin{split}
& \norm{e^{-\A t} \bP f}_{L_{q_1q_1\cdots q_n} (\bR^n)} \leq N t^{-\frac{\sigma}{2}} \norm{f}_{L_{p_1p_2\cdots p_n}(\bR^n)},\\
& \norm{D_{x} e^{-\A t} \bP f}_{L_{q_1q_1\cdots q_n} (\bR^n)} \leq N t^{-\frac{1}{2}(1 + \sigma)} \norm{f}_{L_{p_1p_2\cdots p_n}(\bR^n)},
\end{split}
\end{equation}
for every $f \in L_{p_1p_2\cdots p_n} (\bR^n)^n$.
\item[\textup{(ii)}] For each $f \in L_{p_1p_2\cdots p_n} (\bR^n)^n$, the following assertions hold
\begin{equation} \label{continuity}
\begin{split}
& \lim_{t\rightarrow 0^+} t^{\frac{\sigma}{2}}\norm{e^{-\A t} \bP f}_{L_{q_1q_1\cdots q_n} (\bR^n)} =0 \quad \text{if} \quad \sigma >0 \quad \text{and also} \\
& \lim_{t\rightarrow 0^+}\norm{[e^{-\A t} \bP f] -\bP f}_{L_{p_1p_1\cdots p_n} (\bR^n)} =0, \quad \text{and} \\
& \lim_{t\rightarrow 0^+}  t^{-\frac{1}{2}(1 + \sigma)}  \norm{D_{x} e^{-\A t} \bP f}_{L_{q_1q_1\cdots q_n} (\bR^n)} =0.
\end{split}
\end{equation}
\end{itemize}
\end{lemma}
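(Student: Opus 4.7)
The plan is to reduce the entire lemma to two tools already proved in Section~\ref{pre-sec}: the mixed-norm heat-kernel estimates of Theorem~\ref{heat-decay} and the mixed-norm boundedness of the Helmholtz-Leray projection from Corollary~\ref{H-L-projection}. The key observation is that $\mathcal{A}$, $\mathbb{P}$, $e^{\Delta t}$, and the spatial derivatives $D_x$ are all Fourier multipliers and hence commute pairwise; since $\mathcal{A} = -\Delta$ on divergence-free fields and $\mathbb{P}f$ is divergence-free, one has the identity
\[
e^{-\mathcal{A} t}\mathbb{P} f \;=\; \mathbb{P}\, e^{\Delta t} f \;=\; e^{\Delta t}(\mathbb{P} f),
\]
and analogously $D_x e^{-\mathcal{A} t}\mathbb{P} f = \mathbb{P}(D_x e^{\Delta t}f)$.

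For part~(i), chaining the two tools gives
\[
\norm{e^{-\mathcal{A} t}\mathbb{P} f}_{L_{q_1\cdots q_n}(\bR^n)} \;\leq\; N\norm{e^{\Delta t} f}_{L_{q_1\cdots q_n}(\bR^n)} \;\leq\; N t^{-\sigma/2}\norm{f}_{L_{p_1\cdots p_n}(\bR^n)},
\]
where the first inequality is Corollary~\ref{H-L-projection} and the second is \eqref{sol-decay}. The derivative bound follows identically from \eqref{D-sol-decay} with $l=1$.

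For part~(ii), I would treat the second limit first: since $\mathbb{P}f \in L_{p_1\cdots p_n}(\bR^n)$ by Corollary~\ref{H-L-projection} and $e^{-\mathcal{A} t}\mathbb{P}f = e^{\Delta t}(\mathbb{P}f)$, the desired continuity is precisely Lemma~\ref{cont-lemma} applied componentwise. The first and third limits (the latter should naturally read $t^{(1+\sigma)/2}\norm{D_x e^{-\mathcal{A} t}\mathbb{P}f}_{L_{q_1\cdots q_n}}\to 0$, matching the decay rate in~(i)) are handled by a standard density argument. Given $\epsilon>0$, I would use the truncation-plus-cutoff scheme already invoked inside Lemma~\ref{cont-lemma} to produce $\tilde f \in C_c^\infty(\bR^n)^n$ with $\norm{f-\tilde f}_{L_{p_1\cdots p_n}} < \epsilon$. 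The error part is absorbed by part~(i):
\[
t^{\sigma/2}\norm{e^{-\mathcal{A} t}\mathbb{P}(f-\tilde f)}_{L_{q_1\cdots q_n}} \;\leq\; N\norm{f-\tilde f}_{L_{p_1\cdots p_n}} \;\leq\; N\epsilon,
\]
while for the smooth good part, the equal-index case $p_k = q_k$, $\sigma = 0$ of Theorem~\ref{heat-decay} gives $\norm{e^{-\mathcal{A} t}\mathbb{P}\tilde f}_{L_{q_1\cdots q_n}} \leq N\norm{\tilde f}_{L_{q_1\cdots q_n}}$, uniformly bounded in $t$, so the prefactor $t^{\sigma/2}$ with $\sigma > 0$ sends the smooth contribution to $0$. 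The derivative version is identical, using $D_x \tilde f \in L_{q_1\cdots q_n}(\bR^n)$ for $\tilde f \in C_c^\infty$.

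The main obstacle is essentially bookkeeping rather than analysis: one must verify that bounded compactly supported (equivalently, $C_c^\infty$) functions are dense in $L_{p_1\cdots p_n}(\bR^n)$ under the mixed-norm indices considered, and that the approximant $\tilde f$ can be chosen simultaneously in $L_{p_1\cdots p_n} \cap L_{q_1\cdots q_n}$ with $D_x \tilde f \in L_{q_1\cdots q_n}$. Both facts are standard for $1 \leq p_k, q_k < \infty$ and are already implicit in the proof of Lemma~\ref{cont-lemma}. Beyond this routine step, no new analysis is needed, so the lemma is essentially a corollary of the fundamental estimates developed in Section~\ref{pre-sec}.
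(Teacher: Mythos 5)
Your proposal is correct and follows essentially the same route as the paper: part (i) by chaining Theorem~\ref{heat-decay} with Corollary~\ref{H-L-projection} through the identity $e^{-\A t}\bP = e^{\Delta t}\bP$, the second limit in (ii) via Lemma~\ref{cont-lemma}, and the first and third limits by the same density argument in $L_{p_1p_2\cdots p_n}(\bR^n)\cap L_{q_1q_2\cdots q_n}(\bR^n)$ with the error absorbed by part (i) (the paper takes a general approximant $g$ in the intersection where you take $\tilde f\in C_c^\infty$; your choice, with $D_x\tilde f\in L_{q_1q_2\cdots q_n}(\bR^n)$, has the small advantage of covering the $\sigma=0$ case of the third limit, which the paper's sketch leaves implicit). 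You also correctly spotted that the exponent $t^{-\frac{1}{2}(1+\sigma)}$ in the third limit of \eqref{continuity} is a sign typo for $t^{\frac{1}{2}(1+\sigma)}$, consistent with the decay rate in part (i) and with how the lemma is used in the proof of Theorem~\ref{main-thrm}.
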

\begin{proof} We begin with the proof of (i). As $\A = - \bP \Delta  = - \Delta \bP$, we see that $\A = -\Delta$ when acting on the class of divergenge free vector fields. Therefore, $e^{-\A t} \bP  = e^{\Delta t}\bP $. Then, by using the decay estimate for the heat equation in mixed norm developed in Theorem \ref{heat-decay}, we see that
\[
\norm{e^{-\A t} \bP f}_{L_{q_1q_1\cdots q_n} (\bR^n)} \leq N t^{-\frac{\sigma}{2}} \norm{\bP(f)}_{L_{p_1p_2\cdots p_n}(\bR^n)}.
\]
On the other hand,  from Corollary \ref{H-L-projection}, we see that the Helmholtz-Leray projection 
$$\bP: L_{p_1p_2\cdots p_n}(\bR^n)^n \rightarrow L_{p_1p_2\cdots p_n}(\bR^n)^n$$ 
is bounded. From this and the last estimate, we obtain the first estimate in \eqref{linear-decay.est}. The second estimate in \eqref{linear-decay.est} can be proved in the same way. 

Next, we prove (ii). We assume that $\sigma >0$ and we will prove the first assertion in \eqref{continuity}.  We may assume that $f$ is bounded and compactly supported if needed. Let $\epsilon >0$.  Then, by using approximation, we can find $g \in L_{p_1p_2\cdots p_n}(\bR^n)^n \cap L_{q_1q_2\cdots q_n}(\bR^n)^n$ such that
\[
\norm{f -g}_{L_{p_1p_2\cdots p_n}(\bR^n)} \leq \frac{\epsilon}{2N}
\]
where $N >0$ is defined in (i). Now, using the first assertion in (i), we see that
\[
t^{\frac{\sigma}{2}}\norm{e^{-\A t} \bP (f -g)}_{L_{q_1q_2\cdots q_n}(\bR^n)} \leq N \norm{f-g}_{L_{p_1p_2\cdots p_n}(\bR^n)} \leq \frac{\epsilon}{2}.
\]
On the other hand, using the first assertion in (i) again, we also obtain
\[
t^{\frac{\sigma}{2}}\norm{e^{-\A t} \bP g}_{L_{q_1q_2\cdots q_n}(\bR^n)} \leq N t^{\frac{\sigma}{2}} \norm{g}_{L_{q_1q_2\cdots q_n}(\bR^n)} \rightarrow 0 \quad \text{as} \quad t \rightarrow 0^+.
\]
Now, combine the last two estimates, we infer that there is small number $\delta_0 = \delta_0 (\epsilon) >0$ such that 
\[
 t^{\frac{\sigma}{2}}\norm{e^{-\A t} \bP f}_{L_{q_1q_1\cdots q_n} (\bR^n)} \leq \epsilon, \quad \forall \
t \in (0, \delta_0). \]
This implies that
\[
\lim_{t\rightarrow 0^+} t^{\frac{\sigma}{2}}\norm{e^{-\A t} \bP f}_{L_{q_1q_1\cdots q_n} (\bR^n)} =0,
\]
and the first assertion in \eqref{continuity} is proved.  Observe also that the last assertion in (ii) can be done in a similar way. Meanwhile, the second assertion of \eqref{continuity} is due to the continuity of the heat semi-group  in Lemma \ref{cont-lemma} and the continuity of the Helmholtz-Leray in the mixed norm $L_{p_1p_1\cdots p_n}(\bR^n)^n$ as from Corollary \ref{H-L-projection}. The proof of the lemma is therefore completed.
\end{proof}
Our next lemma gives some important estimates in mixed norm for the bilinear term $\G (u,v)$ defined in \eqref{G.def}.
\begin{lemma} \label{nonlinear-est} Let $p_k \in (1, \infty)$ and $\alpha_k, \beta_k, \gamma_k \in (0,1]$
be given numbers satisfying
\[
\gamma_k \leq \alpha_k + \beta_k < p_k, \quad k = 1,2,\cdots, n.
\]
Let
\[
\alpha = \sum_{k=1}^n \frac{\alpha_k}{p_k}, \quad  \beta = \sum_{k=1}^n \frac{\beta_k}{p_k}, \quad \text{and} \quad \gamma = \sum_{k=1}^n \frac{\gamma_k}{p_k}.
\]
Then,
\[
\begin{split}
&\norm{\G (u,v)(t)}_{L_{\frac{p_1}{\gamma_1} \frac{p_2}{\gamma_2} \cdots \frac{p_n}{\gamma_n}}(\bR^n)} \\
&  \qquad \leq N \int_0^t (t-s)^{-\frac{\alpha +\beta - \gamma}{2}} \norm{u(s)}_{L_{\frac{p_1}{\alpha_2} \frac{p_2}{\alpha_2} \cdots \frac{p_n}{\alpha_n}}(\bR^n)}\norm{D_x v(s)}_{L_{\frac{p_1}{\beta_1} \frac{p_2}{\beta_2} \cdots \frac{p_n}{\beta_n}}(\bR^n)} ds, \\
&\norm{D_x \G (u,v)(t)}_{L_{\frac{p_1}{\gamma_1} \frac{p_2}{\gamma_2} \cdots \frac{p_n}{\gamma_n}}(\bR^n)} \\
& \qquad \leq N \int_0^t (t-s)^{-\frac{1+ \alpha +\beta - \gamma}{2}} \norm{u(s)}_{L_{\frac{p_1}{\alpha_1} \frac{p_2}{\alpha_2} \cdots \frac{p_n}{\alpha_n}}(\bR^n)}\norm{D_x v(s)}_{L_{\frac{p_1}{\beta_1} \frac{p_2}{\beta_2} \cdots \frac{p_n}{\beta_n}}(\bR^n)} ds,
\end{split}
\]
where $N$ is a positive number depending only on $n$, $p_k, \alpha_l, \beta_k, \alpha_k$ for $k =1,2,\cdots, n$.
\end{lemma}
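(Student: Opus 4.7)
The strategy is to combine the mixed-norm semigroup bounds from Lemma \ref{linear-est} with a mixed-norm H\"older inequality inside the Duhamel integral. Starting from the representation $\G(u,v)(t) = -\int_0^t e^{-(t-s)\A}\bP\bigl[(u(s)\cdot\nabla)v(s)\bigr]\,ds$ given in \eqref{G.def}, I will first estimate the integrand pointwise in $s$ and then invoke Minkowski's inequality for integrals to move the mixed norm inside the $s$-integration.

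The key algebraic observation is the following choice of exponents. For each $k$, set
\[
\frac{1}{r_k}=\frac{\alpha_k+\beta_k}{p_k}.
\]
The assumption $\alpha_k+\beta_k<p_k$ guarantees $r_k\in(1,\infty)$, and the assumption $\gamma_k\le \alpha_k+\beta_k$ gives $r_k\le p_k/\gamma_k$. The iterated mixed-norm H\"older inequality (applied one coordinate at a time, starting from the innermost integration) then yields
\[
\bignorm{(u\cdot \nabla)v}_{L_{r_1\cdots r_n}(\bR^n)} \le \bignorm{u}_{L_{p_1/\alpha_1\cdots p_n/\alpha_n}(\bR^n)}\,\bignorm{D_x v}_{L_{p_1/\beta_1\cdots p_n/\beta_n}(\bR^n)},
\]
since $\frac{\alpha_k}{p_k}+\frac{\beta_k}{p_k}=\frac{1}{r_k}$ in each variable.

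Next I apply Lemma \ref{linear-est} to $e^{-(t-s)\A}\bP$, reading the ``$p_k$'' of that lemma as our $r_k$ and the ``$q_k$'' as $p_k/\gamma_k$. The required condition $r_k\le p_k/\gamma_k$ holds by the previous paragraph, and the resulting exponent is
\[
\sigma=\sum_{k=1}^n\Bigl[\tfrac{1}{r_k}-\tfrac{\gamma_k}{p_k}\Bigr]=\sum_{k=1}^n\tfrac{\alpha_k+\beta_k-\gamma_k}{p_k}=\alpha+\beta-\gamma,
\]
so the first bound in \eqref{linear-decay.est} produces the factor $(t-s)^{-(\alpha+\beta-\gamma)/2}$, while the second bound in \eqref{linear-decay.est} produces the extra $(t-s)^{-1/2}$ needed for the $D_x\G(u,v)$ estimate. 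Combining with the H\"older estimate for $(u\cdot\nabla)v$ and passing the $L_{p_1/\gamma_1\cdots p_n/\gamma_n}$-norm inside the time integral by Minkowski gives both inequalities in the lemma.

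The main technical points are therefore (i) verifying that the iterated H\"older inequality genuinely factors with the exponents $\alpha_k,\beta_k$ coordinate by coordinate (which requires writing $|u\cdot\nabla v|\le |u||D_x v|$ and then peeling off the $L_{p_1}$-integration first, then $L_{p_2}$, etc.), and (ii) checking that all exponents produced by the procedure are admissible for Lemma \ref{linear-est}, which is exactly the content of the hypotheses $\gamma_k\le\alpha_k+\beta_k<p_k$. No integrability of $(t-s)^{-(\alpha+\beta-\gamma)/2}$ near $s=t$ is asserted or needed at this stage; the lemma is a pointwise-in-$t$ bound on the nonlinear term that will be used to close the fixed-point argument for \eqref{u-abstract.eqn} in the main theorem.
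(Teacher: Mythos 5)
Your proposal is correct and follows essentially the same route as the paper's proof: both introduce the intermediate exponents $r_k = p_k/(\alpha_k+\beta_k)$, bound the semigroup $e^{-(t-s)\A}\bP$ via Lemma \ref{linear-est} (using the boundedness of $\bP$ from Corollary \ref{H-L-projection}) with decay exponent $\sigma=\alpha+\beta-\gamma$ (and the extra $(t-s)^{-1/2}$ for the gradient bound), and then factor $(u\cdot\nabla)v$ by coordinate-wise iterated H\"older using $\frac{\alpha_k+\beta_k}{p_k}=\frac{\alpha_k}{p_k}+\frac{\beta_k}{p_k}$. Your admissibility checks ($r_k\in(1,\infty)$ from $\alpha_k+\beta_k<p_k$, and $r_k\le p_k/\gamma_k$ from $\gamma_k\le\alpha_k+\beta_k$) are exactly the content of the hypotheses, just as in the paper.
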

\begin{proof} We only prove the first assertion in the lemma as the proof of the second one can be done similarly. By applying the first estimate in \eqref{linear-decay.est}, we see that
\[
\norm{\G (u,v)(t)}_{L_{\frac{p_1}{\gamma_1} \frac{p_2}{\gamma_2} \cdots \frac{p_n}{\gamma_n}}(\bR^n)} \leq N \int_0^t (t-s)^{-\frac{\alpha +\beta - \gamma}{2}} \norm{F(u(s), v(s))}_{L_{\frac{p_1}{\alpha_1+ \beta_1} \frac{p_2}{\alpha_2+ \beta_2} \cdots \frac{p_n}{\alpha_n+\beta_n}}(\bR^n)} ds,
\]
where the bilinear function $F$ is defined in \eqref{F.def}.  From this and the boundedness of the Helmholtz-Leray projection $\bP$ as stated in Corollary \ref{H-L-projection}, we see that
\[
\norm{\G (u, v)(t)}_{L_{\frac{p_1}{\gamma_1} \frac{p_2}{\gamma_2} \cdots \frac{p_n}{\gamma_n}}(\bR^n)} \leq N \int_0^t (t-s)^{-\frac{\alpha +\beta - \gamma}{2}} \norm{(u\cdot\nabla) v}_{L_{\frac{p_1}{\alpha_1+ \beta_1} \frac{p_2}{\alpha_2+ \beta_2} \cdots \frac{p_n}{\alpha_n+\beta_n}}(\bR^n)} ds.
\]
Then, as
\[
\frac{\alpha_k + \beta_k}{p_k} = \frac{\alpha_k}{p_k} + \frac{\beta_k}{p_k}, \quad \text{for all} \  k = 1,2,\cdots, n
\]
we can repeatedly apply the H\"{o}lder's inequality for each integration with respect to each variable $x_k$ to find that
\[
\norm{(u\cdot\nabla) v}_{L_{\frac{p_1}{\alpha_1+ \beta_1} \frac{p_2}{\alpha_2+ \beta_2} \cdots \frac{p_n}{\alpha_n+\beta_n}}(\bR^n)} \leq \norm{u(s)}_{L_{\frac{p_1}{\alpha_1} \frac{p_2}{\alpha_2} \cdots \frac{p_n}{\alpha_n}}(\bR^n)}\norm{D_x v(s)}_{L_{\frac{p_1}{\beta_1} \frac{p_2}{\beta_2} \cdots \frac{p_n}{\beta_n}}(\bR^n)}.
\]
The desired estimate then follows and the proof is complete.
\end{proof}
To prove Theorem \ref{main-thrm}, our goal is to show that the abstract equation \eqref{u-abstract.eqn} has unique fixed points in suitable spaces. For this purpose, let us recall the following abstract lemma which is useful in the study of initial value problem for Navier-Stokes equations, see \cite[Lemma 3.1]{Phan-Phuc} and also \cite{Meyer}.
\begin{lemma}\label{abs-lemma} Let $X$ be a Banach space with norm $\norm{\cdot}_{X}$. Let $\G: X \times X \rightarrow X$ be a bilinear map such that there is $N_0 >0$ so that
\[
\norm{\G(u,v)}_X \leq N_0 \norm{u}_X \norm{v}_X, \quad \forall \ u ,\ v \in X.
\]
Then, for every $u_0 \in X$ with $4N_0 \norm{u_0}_X < 1$, the equation
\[
u = u_0 + \G(u,u)
\]
has unique solution $u \in X$ with
\[
\norm{u}_X \leq 2\norm{u_0}_X.
\]
\end{lemma}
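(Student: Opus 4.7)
The plan is a standard Banach contraction argument applied to the map $T(u) := u_0 + \G(u,u)$ on a closed ball in $X$ of radius tuned to the target estimate $\norm{u}_X \leq 2\norm{u_0}_X$. Set $R := 2\norm{u_0}_X$ and let $B_R := \{v \in X : \norm{v}_X \leq R\}$, which is a complete metric space in the induced metric. I will show $T : B_R \to B_R$ and that $T$ is a strict contraction, so that the classical Banach fixed point theorem yields a unique $u \in B_R$ satisfying $u = T(u)$, which is precisely the equation $u = u_0 + \G(u,u)$, together with the bound $\norm{u}_X \leq R = 2\norm{u_0}_X$.

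For the self-mapping property, first I would bound, for $u \in B_R$,
\[
\norm{T(u)}_X \leq \norm{u_0}_X + \norm{\G(u,u)}_X \leq \norm{u_0}_X + N_0 \norm{u}_X^2 \leq \norm{u_0}_X + N_0 R^2.
\]
Substituting $R = 2\norm{u_0}_X$ gives $\norm{T(u)}_X \leq \norm{u_0}_X + 4 N_0 \norm{u_0}_X^2 = \norm{u_0}_X(1 + 4N_0\norm{u_0}_X)$, and since the hypothesis $4N_0 \norm{u_0}_X < 1$ forces $1 + 4N_0\norm{u_0}_X < 2$, we obtain $\norm{T(u)}_X < 2\norm{u_0}_X = R$, so $T(u) \in B_R$.

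For the contraction property, I would exploit bilinearity via the algebraic identity
\[
\G(u,u) - \G(v,v) = \G(u-v, u) + \G(v, u-v),
\]
which yields, for $u,v \in B_R$,
\[
\norm{T(u) - T(v)}_X \leq N_0 \bigl(\norm{u}_X + \norm{v}_X\bigr) \norm{u-v}_X \leq 2 N_0 R \, \norm{u - v}_X = 4 N_0 \norm{u_0}_X \, \norm{u - v}_X.
\]
By the assumption $4N_0\norm{u_0}_X < 1$, $T$ is a strict contraction on $B_R$, so the Banach fixed point theorem provides a unique fixed point $u \in B_R$, giving existence, the bound $\norm{u}_X \leq 2\norm{u_0}_X$, and uniqueness within the ball where the conclusion is asserted.

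There is no real obstacle here; the only subtlety worth flagging is the choice of radius $R = 2\norm{u_0}_X$, which is dictated by the desired a priori bound and which precisely matches the contraction threshold provided by the hypothesis $4N_0\norm{u_0}_X < 1$: a smaller radius would fail the self-mapping step, while a larger one would weaken the contraction estimate.
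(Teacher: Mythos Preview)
Your argument is correct and is the standard Banach fixed-point proof of this classical lemma. The paper itself does not supply a proof but merely cites the result from \cite[Lemma 3.1]{Phan-Phuc} and \cite{Meyer}; your contraction argument on the ball of radius $2\norm{u_0}_X$ is precisely the proof one finds in those references.
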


We are now ready to prove Theorem \ref{main-thrm}.
\begin{proof}[Proof of Theorem \ref{main-thrm}]  Let $p = (p_1, p_2, \cdots, p_n), q = (q_1, q_2,\cdots q_n)$ with $p_k \in (2,\infty), q_k \in [p_k, \infty)$ for $k = 1, 2,\cdots, n$.  Assume that \eqref{p-cond}  and \eqref{q-cond} hold.  Let $a_0 \in L_{p_1p_2\cdots p_n}(\bR^n)^n$ with $\nabla \cdot a_0 =0$ and recall that
\begin{equation} \label{delta-cond}
 \delta = \frac{1}{q_1} + \frac{1}{q_2} + \cdots + \frac{1}{q_n}  \in (0, 1).
\end{equation}

We now prove (i). Recall the definitions of $\cX_{p, q, \infty}$ and $\cY_{p, q, \infty}$ in \eqref{X-spc.def} and \eqref{Y-spc.def}. We plan to prove the existence of solution $u \in \cX_{p, q, \infty}$ of \eqref{u-abstract.eqn}, and then prove that the solution $u \in \cY_{p, q, \infty}$. Our goal is to apply Lemma \ref{abs-lemma} to obtain the existence and uniqueness of solution of \eqref{u-abstract.eqn} in $\cX_{p, q, \infty}$.  To this end, we begin with the proof that $u_0 \in \cX_{p, q, \infty}$.  From (i) of Lemma \ref{linear-est} and the definition of $u_0$ in \eqref{G.def}, we have
\[
\begin{split}
& \norm{u_0(t)}_{L_{q_1q_2\cdots q_n}(\bR^n)} \leq N_1 t^{-\frac{1-\delta}{2}} \norm{a_0}_{L_{p_1p_2\cdots p_n}(\bR^n)} \quad \text{and} \\
& \norm{D_x u_0(t)}_{L_{p_1p_2\cdots p_n}(\bR^n)} \leq N_1 t^{-\frac{1}{2}} \norm{a_0}_{L_{p_1p_2\cdots p_n}(\bR^n)}, \quad \forall \ t >0,
\end{split}
\]
where $N_1>0$ is a universal constant depending only on $n, p$ and $q$. Moreover, it follows from  (ii) of Lemma \ref{linear-est} that $t^{(1-\delta)/2} e^{-\A t}\bP$ is uniformly bounded from $L_{p_1p_2\cdots p_n}(\bR^n)^n$ to $PL_{q_1q_2\cdots q_n}(\bR^n)$ and tends to zero as $t\rightarrow 0^+$, we see that $t^{(1-\delta)/2} u_0$ vanishes as $t =0$. Similarly, as $t^{1/2} D_x e^{-\A t}\bP$ is uniformly bounded from $L_{p_1p_2\cdots p_n}(\bR^n)^n$ to $PL_{p_1p_2\cdots p_n}(\bR^n)^n$ and tends to zero as $t \rightarrow 0^+$, we also have $t^{1/2} D_x u_0$ equals to zero as $t \rightarrow 0^+$. In conclusion, we have shown that $u_0 \in \cX_{p, q,\infty}$ and
\begin{equation} \label{u-0-norm}
\norm{u_0}_{\cX_{p, q, \infty}} \leq N_1 \norm{a_0}_{L_{p_1p_2\cdots p_n}(\bR^n)}.
\end{equation}
It now remains to prove that the bilinear form $\G: \cX_{p, q, \infty} \times \cX_{p, q, \infty} \rightarrow \cX_{p, q, \infty}$ is bounded. By \eqref{delta-cond} and \eqref{p-cond}, we apply the first assertion in Lemma \ref{nonlinear-est} with $\beta_k =1$ and $\gamma_k = \alpha_k = \frac{p_k}{q_k} \in (0, 1]$ to find that
 \[
 \begin{split}
&  \norm{\G(u,v)(t)}_{L_{q_1q_2\cdots q_n}(\bR^n)} \\
& \leq N \int_{0}^t (t-s)^{-\frac{1}{2}} \norm{u(s)}_{L_{q_1q_2\cdots q_n}(\bR^n)} \norm{D_x v(s)}_{L_{p_1p_2\cdots p_n}(\bR^n)} ds \\
 & \leq N \norm{u}_{\cX_{p, q, \infty}} \norm{v}_{\cX_{p, q, \infty}} \int_{0}^t (t-s)^{-\frac{1}{2}} s^{-1 + \frac{\delta}{2}} ds.
\end{split}
\]
To control the integration in the last estimate, we split it into two time intervals $(0, t/2)$ and $(t/2, t)$. We then obtain
\[
\begin{split}
& \norm{\G(u,v)(t)}_{L_{q_1q_2\cdots q_n}(\bR^n)}\\
& \leq N \norm{u}_{\cX_{p, q, \infty}} \norm{v}_{\cX_{p, q, \infty}} \left[ \int_{0}^{t/2} (t-s)^{-\frac{1}{2}} s^{-1 + \frac{\delta}{2}} ds +  \int_{t/2}^{t/2} (t-s)^{-\frac{1}{2}} s^{-1 + \frac{\delta}{2}} ds \right]\\
& \leq N \norm{u}_{\cX_{p, q, \infty}} \norm{v}_{\cX_{p, q, \infty}} \left[t^{-\frac{1}{2}}  \int_{0}^{t/2} s^{-1 + \frac{\delta}{2}} ds +  t^{-1 + \frac{\delta}{2}} \int_{t/2}^{t/2} (t-s)^{-\frac{1}{2}}  ds \right]\\
 & \leq N t^{-\frac{1-\delta}{2}} \norm{u}_{\cX_{p, q, \infty}} \norm{v}_{\cX_{p, q, \infty}}.
\end{split}
 \]
 Similarly,  By using \eqref{delta-cond} and \eqref{p-cond}, and applying the second assertion in Lemma \ref{nonlinear-est} with $\gamma_k =1$, $\beta_k =1$ and $\alpha_k = \frac{p_k}{q_k} \in (0, 1]$, we also have
 \begin{equation} \label{dx-G-u-v.est}
 \begin{split}
 & \norm{D_x\G(u,v)(t)}_{L_{p_1p_2\cdots p_n}(\bR^n)} \\
 &  \leq N \int_{0}^t (t-s)^{-\frac{1+\delta}{2}} \norm{u(s)}_{L_{q_1q_2\cdots q_n}(\bR^n)} \norm{D_x u(s)}_{L_{p_1p_2\cdots p_n}(\bR^n)} ds\\
 & \leq  N \norm{u}_{\cX_{p, q, \infty}} \norm{v}_{\cX_{p, q, \infty}} \int_{0}^t (t-s)^{-\frac{1+\delta}{2}} s^{-1 +\frac{\delta}{2}} ds \\
& =  N \norm{u}_{\cX_{p, q, \infty}} \norm{v}_{\cX_{p, q, \infty}} \left[ \int_{0}^{t/2}  (t-s)^{-\frac{1+\delta}{2}} s^{-1 +\frac{\delta}{2}} ds + \int_{t/2}^t (t-s)^{-\frac{1+\delta}{2}} s^{-1 +\frac{\delta}{2}} ds \right] \\
& =  N \norm{u}_{\cX_{p, q, \infty}} \norm{v}_{\cX_{p, q, \infty}} \left[t^{-\frac{1+\delta}{2}} \int_{0}^{t/2} s^{-1 +\frac{\delta}{2}} ds + t^{-1 +\frac{\delta}{2}} \int_{t/2}^t (t-s)^{-\frac{1+\delta}{2}} ds \right]\\
 & \leq N  t^{-1/2} \norm{u}_{\cX_{p, q, \infty}} \norm{v}_{\cX_{p, q, \infty}}.
 \end{split}
 \end{equation}
 From the last two estimates and the definition of $\G(u,v)$ and Lemma \ref{linear-est}, it follows that $t^{(1-\delta)/2}\G(u,v): [0,\infty) \rightarrow PL_{q_1q_2\cdots q_n}(\bR^n)$ is continuous and vanishes at $t =0$.  Similarly, we can also prove that  $t^{1/2}D_x\G(u,v) : [0, \infty) \rightarrow  PL_{p_1p_2\cdots p_n}(\bR^n)$ is continuous and vanishes at $t =0$. Therefore, we conclude that $G(u,v) \in \cX_{p, q, \infty}$ and
\begin{equation} \label{G-bounded-inft}
 \norm{\G(u,v)}_{\cX_{p, q, \infty}} \leq N_2 \norm{u}_{\cX_{p, q, \infty}} \norm{v}_{\cX_{p, q, \infty}}, \quad \forall \ u ,v \in \cX_{p, q, \infty},
 \end{equation}
where $N_2$ is a constant depending only on $n, p$ and $q$.  In other words, the bilinear form  $\G: \cX_{p, q, \infty} \times \cX_{p, q, \infty}  \rightarrow \cX_{p, q, \infty}$ is  bounded. 

Next, let us choose $\lambda_0 >0$ and sufficiently small so that
\begin{equation} \label{lambda-zero}
4N_1 N_2 \lambda_0 <1,
\end{equation}
where $N_1$ is defined in \eqref{u-0-norm},  and $N_2$ is defined in \eqref{G-bounded-inft}. Note that  both of these numbers depend only on $p, q$ and $n$.  Now,  if $\norm{a_0}_{L_{p_1p_2\cdots p_n}(\bR^n)} \leq \lambda_0$, then it follows from \eqref{u-0-norm} that
\[
4N_2 \norm{u_0}_{\cX_{p ,q, \infty}} \leq 4N_1N_2 \norm{a_0}_{L_{p_1p_2\cdots p_n}(\bR^n)} \leq 4N_1N_2 \lambda_0 <1.
\]
From this and by applying Lemma \ref{abs-lemma}, we can find a unique solution $u \in \cX_{p, q, \infty}$ of the equation \eqref{u-abstract.eqn} such that
\begin{equation} \label{u-est-infty}
\norm{u}_{\cX_{p ,q, \infty}} \leq 2\norm{u_0}_{X_\infty} \leq 2N_1 \norm{a_0}_{L_{p_1p_2\cdots p_n}(\bR^n)}.
\end{equation}
Now, to complete the proof (i), we need to show that $u \in \cY_{p, q, \infty}$. We recall that the definition of $\cY_{p, q, \infty}$ is given in \eqref{Y-spc.def}.  Since
\[
u(t) = u_0(t) + \G(u, u) (t),
\]
we have
\begin{equation} \label{u-Y-all}
\begin{split}
& \norm{u(t)}_{L_{p_1p_2\cdots p_n}(\bR^n)} \leq \norm{u_0(t)}_{L_{p_1p_2\cdots p_n}(\bR^n)} + \norm{\G(u, u) (t)}_{L_{p_1p_2\cdots p_n}(\bR^n)}, \quad \text{and}\\
& \norm{D_x u(t)}_{L_{p_1p_2\cdots p_n}(\bR^n)} \leq \norm{D_x u_0(t)}_{L_{p_1p_2\cdots p_n}(\bR^n)} + \norm{D_x \G(u, u) (t)}_{L_{p_1p_2\cdots p_n}(\bR^n)}.
\end{split}
\end{equation}
Then, by applying Lemma \ref{linear-est}, we see that
\begin{equation} \label{u-zero-Y}
\begin{split}
& \norm{u_0(t)}_{L_{p_1p_2\cdots p_n}(\bR^n)} \leq N \norm{a_0}_{L_{p_1p_2\cdots p_n}(\bR^n)}, \quad \text{and} \\
&  \norm{D_x u_0(t)}_{L_{p_1p_2\cdots p_n}(\bR^n)} \leq N t^{-1/2} \norm{a_0}_{L_{p_1p_2\cdots p_n}(\bR^n)}.
\end{split}
\end{equation}
On the other hand, by \eqref{delta-cond} and \eqref{p-cond}, we can apply the first assertion in Lemma \ref{nonlinear-est} with $\gamma_k =1$, $\alpha_k = \frac{p_k}{q_k} \in (0, 1]$ and $\beta_k =1$ to infer  that
\begin{equation} \label{G-u-u-p.est}
\begin{split}
& \norm{\G(u,u)(t)}_{L_{p_1p_2\cdots p_n}(\bR^n)} \\
&  \leq N \int_{0}^t (t-s)^{-\frac{\delta}{2}}\norm{u(s)}_{L_{q_1q_2\cdots q_n}(\bR^n)} \norm{D_x u(s)}_{L_{p_1p_2\cdots p_n}(\bR^n)} ds\\
& \leq  N \norm{u}_{\cX_{p, q, \infty}}^2 \int_{0}^t (t-s)^{-\frac{\delta}{2}} s^{-(1-\frac{\delta}{2})} ds\\
& = N \norm{u}_{\cX_{p, q, \infty}}^2  \left [ \int_{0}^{t/2}  (t-s)^{-\frac{\delta}{2}} s^{-(1-\frac{\delta}{2})} ds + \int_{t/2}^{t}  (t-s)^{-\frac{\delta}{2}} s^{-(1-\frac{\delta}{2})} ds \right]\\
& = N \norm{u}_{\cX_{p, q, \infty}}^2  \left [ t^{-\frac{\delta}{2}} \int_{0}^{t/2}  s^{-(1-\frac{\delta}{2})} ds + t^{-(1-\frac{\delta}{2})} \int_{t/2}^{t}  (t-s)^{-\frac{\delta}{2}}  ds \right]\\
& \leq N \norm{a_0}^2_{L_{p_1p_2\cdots p_n}(\bR^n)},
\end{split}
\end{equation}
where in the last estimate, we used \eqref{u-est-infty}. Also, by  \eqref{dx-G-u-v.est} and  \eqref{u-est-infty}, it follows that
\begin{equation} \label{Dx-u-u.est}
\norm{D_x \G(u,u)}_{L_{p_1p_2\cdots p_n}(\bR^n)}  \leq N t^{-1/2}\norm{u}_{\cX_{p, q, \infty}}^2 \leq   N t^{-1/2}\norm{a_0}^2_{L_{p_1p_2\cdots p_n}(\bR^n)}.
\end{equation}
Then,  from the estimates \eqref{u-Y-all}, \eqref{u-zero-Y}, \eqref{G-u-u-p.est}, \eqref{Dx-u-u.est} and the fact that $\norm{a_0}_{L_{p_1p_2\cdots p_n}(\bR^n)}$ is sufficiently small that, we see that 
\[
\norm{u}_{\cY_{p, q, \infty}} \leq N_0\norm{a_0}_{L_{p_1p_2\cdots p_n}(\bR^n)}.
\]
The proof of (i) is therefore complete.

Now, we turn to prove (ii).  As in the proof of \eqref{u-0-norm}, we see that $u_0 \in \cX_{p, q, \infty}$.  From the definition of the norm of the space $ \cX_{p, q, \infty}$ in \eqref{X-spc.def}, the continuity and the vanishes of $t^{(1-\delta)/2}u_0$ and of $t^{1/2}D_x u_0$ at $t =0$,  we can choose a sufficiently small number $T_0>0$ depending on $n, p, q$ and $a_0$ so that
\[
\norm{u_0}_{\cX_{p, q, T_0}} \leq \lambda_0,
\]
where $\lambda_0$ is defined as in \eqref{lambda-zero}.  Moreover, by following the proof of \eqref{G-bounded-inft}, we can also see that the bilinear form $\G: \cX_{p, q, T_0} \times \cX_{p, q, T_0} \rightarrow \cX_{p, q, T_0}$ is bounded  with
\[
\norm{\G(u,v)}_{\cX_{p, q, T_0}} \leq N_2\norm{u}_{\cX_{p, q, T_0}} \norm{v}_{\cX_{p, q, T_0}}, \quad \forall \ u, v \in \cX_{p, q, T_0}.
\]
Then, applying Lemma \ref{abs-lemma} again, we can find a unique local time solution $u \in \cX_{p, q, T_0}$ of  \eqref{u-abstract.eqn} satisfying
\[
\norm{u}_{\cX_{p, q, T_0}} \leq 2N_1 \norm{a_0}_{L_{p_1p_2\cdots p_n}(\bR^n)}.
\]
Now, we only need to prove that the solution $u$ that we found is indeed in $\cY_{p, q, T_0}$.  However, this can be done exactly the same as in the proof  that $u \in \cY_{p, q, \infty}$ in (i), and we skip it. The proof of the theorem is then complete.
\end{proof}
\begin{remark} The pressure $P$ in \eqref{NS.eqn} can be solved from the solution $u = (u_1, u_2,\cdots, u_n)$ as
\[
P = \sum_{i,j =1}^n \mathscr{R}_i\mathscr{R}_j(u_i u_j),
\]
where $\mathscr{R}_j$ is the $i^{th}$ Riesz transform, which is defined in Theorem \ref{Riesz}. Since $p_k >2$ for all $k =1,2,\cdots, n$, we can apply Theorem \ref{Riesz} to obtain
\[
\norm{P(\cdot, t)}_{L_{\frac{p_1}{2} \frac{p_2}{2} \cdots \frac{p_n}{2} }(\bR^n)} \leq N \norm{u(\cdot, t)}_{L_{p_1p_2\cdots p_n}(\bR^n)}.
\]
\end{remark}

\noindent \textbf{Acknowledgement.}  The author would like to thanks professor Lorenzo Brandolese (Institut Camille Jordan, Universit\'{e} Lyon 1) and professor Nam Le (Indiana University) for their valuable comments.

\ \\ \\

\begin{thebibliography}{m}
\bibitem{Chemin}  H. Bahouri,  J.-Y. Chemin,  R. Danchin,  {\it Fourier analysis and nonlinear partial differential equations. } Grundlehren der Mathematischen Wissenschaften, 343. Springer, Heidelberg, 2011.

\bibitem{Bourgain-Pavlovic} J. Bourgain, N. Pavlovi\'{c},  {\it Ill-posedness of the Navier-Stokes equations in a critical space in 3D}, J. Funct. Anal. 255 (2008), 2233-2247.

\bibitem{Brandolese}  L. Brandolese,  F. Vigneron,  {\it New asymptotic profiles of nonstationary solutions of the Navier-Stokes system}. J. Math. Pures Appl. (9) 88 (2007), no. 1, 64-86.

\bibitem{Cannone}  M. Cannone, { \it Ondelettes, paraproduits et Navier-Stokes}.  Diderot Editeur, Paris, 1995.

\bibitem{Cannone-2} M. Cannone,  {\it A generalization of a theorem by Kato on Navier-Stokes equations}, Rev. Mat. Iberoam. 13 (1997), 515-541.

\bibitem{Cannone-Planchon}  M. Cannone,  F. Planchon, {\it On the non-stationary Navier-Stokes equations with an external force}. Adv. Differential Equations 4 (1999), no. 5, 697-730.

\bibitem{David}  D. V. Cruz-Uribe, J. M. Martell, and C. P\'{e}rez.  {\it Weights, extrapolation and the theory of Rubio de Francia,} volume 215 of Operator Theory: Advances and Applications. Birkh\"{a}user/Springer Basel AG, Basel, 2011.

\bibitem{Dong-Kim}  H. Dong,  D. Kim, {\it On $L_p$-estimates for elliptic and parabolic equations with $A_p$ weights}. Trans. Amer. Math. Soc. 370 (2018), no. 7, 5081-5130.

\bibitem{Dong-Krylov} H. Dong, N.V. Krylov,  {\it Fully nonlinear elliptic and parabolic equations in weighted and mixed-norm Sobolev spaces},  arXiv:1806.00077.

\bibitem{Dong-Phan} H. Dong, T. Phan, {\it Mixed norm $L_p$-estimates for non-stationary Stokes systems with singular VMO coefficients and applications},  arXiv:1805.04143.

\bibitem {Fu-Ka-1} T. Kato, H. Fujita, {\it On the nonstationary Navier-Stokes system}. Rend. Sem. Mat. Univ. Padova 32 1962 243-260.

\bibitem{Fu-Ka} H. Fujita,  T. Kato, {\it  On the Navier-Stokes initial value problem. I}. 
Arch. Rational Mech. Anal. 16, 1964, 269-315.

\bibitem{RF}  J. Garc\'{i}a-Cuerva,  J. L. Rubio de Francia,  {\it Weighted norm inequalities and related topics}. North-Holland Mathematics Studies, 116. Notas de Matem\'{a}tica, 104. North-Holland Publishing Co., Amsterdam, 1985.

\bibitem{Giga-1} Y. Giga, T. Miyakawa, {\it Solutions in $L^r$ of the Navier-Stokes initial value problem}. Arch. Rational Mech. Anal. 89 (1985), no. 3, 267-281.

\bibitem{Giga-M} Y. Giga and T. Miyakawa,  {\it Navier-Stokes flow in $\bR^3$ with measures as initial vorticity and Morrey spaces}, Comm. Partial Differential Equations 14 (1989), 577-618.

\bibitem{Giga} Y. Giga, {\it Solutions for semilinear parabolic equations in $L^p$ and regularity of weak solutions of the Navier-Stokes system}. J. Differential Equations 62 (1986), no. 2, 186-212.

\bibitem{Kato} T. Kato,  {\it Strong $L^p$-solutions of the Navier-Stokes equation in $\bR^m$, with applications to weak solutions}. Math. Z. 187 (1984), no. 4, 471-480.

\bibitem{Kato-1} T. Kato, {\it Strong solutions of the Navier-Stokes equation in Morrey spaces}, Bol. Soc. Brasil. Mat. (N.S.) 22 (1992), 127-155.

\bibitem{Koch-Tataru} H. Koch, D. Tataru, {\it Well-posedness for the Navier-Stokes equations.} Adv. Math. 157 (2001), no. 1, 22-35.

\bibitem{K-Ya} H. Kozono, M. Yamazaki, {\it Semilinear heat equations and the Navier-Stokes equation with distributions in new function spaces as initial data}, Comm. Partial Differential Equations 19 (1994), 959-1014.

\bibitem{Kazono-Ya}  H. Kozono, M. Yamazaki,  {\it The stability of small stationary solutions in Morrey spaces of the Navier-Stokes equation}, Indiana Univ. Math. J. 44 (1995), 1307-1335.

\bibitem{Krylov-survey} N.V. Krylov, {\it Rubio de Francia extrapolation theorem and related topics in the theory of elliptic and parabolic equations. A survey},  arXiv:1901.00549.

\bibitem{Krylov}  N. V. Krylov,  {\it Parabolic equations with VMO coefficients in Sobolev spaces with mixed norms}. J. Funct. Anal. 250 (2007), no. 2, 521-558.

\bibitem{L-R} P. R.  Lemari\'{e}-Rieusset,  {\it 
The Navier-Stokes problem in the 21st century}. CRC Press, Boca Raton, FL, 2016.

\bibitem{Leray} J. Leray,  {\it \'{E}tude de diverses \'{e}quations int\'{e}grales non lin\'{e}aires et de quelques probl\'{e}mes que pose l'Hydrodynamique}, J. Math. Pures Appl. 9 (1933), 1-82.

\bibitem{Leray-1}  J. Leray, {\it Sur le mouvement d'un liquide visqueux emplissant l'espace}, Acta Math. 63 (1934), no. 1, 193-248.

\bibitem{Meyer} Y. Meyer, {\it Wavelets, Paraproducts and Navier-Stokes Equations}. Current Developments in Mathematics, 1996 (Cambridge, MA), Int. Press, Boston, MA, 1997, pp. 105-212.

\bibitem{F-Planchon} F. Planchon, {\it Global strong solutions in Sobolev or Lebesgue spaces to the incompressible Navier-Stokes equations in $\bR^3$}, Ann. Inst. Henri Poincare, Anal. Non Lineaire 13 (1996), 319-336.

\bibitem{Phan-Phuc}  T. V. Phan, N. C. Phuc, {\it Stationary Navier-Stokes equations with critically singular external forces: existence and stability results}. Adv. Math. 241 (2013), 137-161.

\bibitem{TP} T. Phan, {\it Liouville type theorems for 3D stationary Navier-Stokes equations in weighted mixed-norm Lebesgue spaces}, arXiv:1812.10135.

\bibitem{Taylor} M.E. Taylor,  {\it Analysis of Morrey spaces and applications to Navier-Stokes and other evolution equations}, Comm. Partial Differential Equations 17 (1992), 1407-1456.

\bibitem{Tsai} T.-P. Tsai,  {\it Lectures on Navier-Stokes equations}. Graduate Studies in Mathematics, 192. American Mathematical Society, Providence, RI, 2018.




\end{thebibliography}
 \end{document}